\documentclass[11pt,reqno]{amsart}
\usepackage{amsfonts}
\usepackage{fancyhdr}
\usepackage{times}
\usepackage[T1]{fontenc}
\usepackage{mathrsfs}
\usepackage{latexsym}
\usepackage[dvips]{graphics}
\usepackage{epsfig}
\usepackage{epsf}
\usepackage{hyperref}
\usepackage{flafter}
\usepackage{amsmath,amsfonts,amsthm,amssymb,amscd}
\input amssym.def
\input amssym.tex
\usepackage{color}
\usepackage[parfill]{parskip}
\parskip.11in
\parindent=.2in
 \usepackage{mathrsfs}
 \usepackage{geometry}
 \geometry{left=3.5cm,right=3.5cm,top=3cm,bottom=3cm}
\usepackage{epstopdf}
\usepackage{verbatim}
\usepackage{ytableau}
\usepackage{mathdots}
\usepackage{bbm}

\newtheorem{thm}{Theorem}
\newtheorem{cor}[thm]{Corollary}

\newtheorem{lem}[thm]{Lemma}
\newtheorem{thm*}{Theorem}
\newtheorem*{con*}{Conjecture}
\newtheorem*{lem*}{Lemma}
\newtheorem{prop}[thm]{Proposition}

\def \id{\operatorname{Id}}

\def \D{\Delta}

\def \e{\varepsilon}

\def \Z{\mathbb{Z}}

\def \k{\mathbbm{k}}

\def \id{\operatorname{Id}}

\def \D{\Delta}

\def \e{\varepsilon}

\def \Z{\mathbb{Z}}

\begin{document}

\title[Combinatorial identity and finite dual]{A combinatorial identity and the finite dual of infinite dihedral group algebra}
\subjclass[2010]{05A19, 16T05}
\keywords{Robinson-Schensted-Knuth correspondence, Finit dual, Hopf algebra, Vandermonde determinant}

\author{Fan Ge \ and \ Gongxiang Liu}

\address{Department of Mathematics, William \& Mary, Williamsburg, VA, United States}

\address{Department of Mathematics, Nanjing University, Nanjing 210093, China}

\email{fge@wm.edu}
\email{gxliu@nju.edu.cn}

\thanks{The first author was partially supported by a startup fund and a summer research award from William \& Mary and the second author is supported by NSFC 11722016.}

\date{}
\maketitle
\begin{abstract}
In this note, we find a combinatorial identity which is closely related to the multi-dimensional integral $\gamma_{m}$ in the study of divisor functions (see \eqref{eq gamma}). As an application, we determine the finite dual of the group algebra of infinite dihedral group.
\end{abstract}
	
\section{Introduction}
Throughout, $\k$ is an algebraically closed field of characteristic zero and all vector spaces are $\k$-spaces. Let $m$ and $n$ be two positive integers, and $U=U(m,n)=\{1,2,...,m+n\}$.
\subsection{Motivation.} It is known that it is quite hard to determine the finite dual $H^{\circ}$ of an infinite dimensional Hopf algebra $H$ in general. Of course, the most direct way to get $H^{\circ}$ is by definition. For this, recall that $H^{\circ}$ is the Hopf algebra generated by $f\in H^{\ast}$ which vanish on an ideal $I\subset H$ of finite codimension. This means that we need a description of \emph{all} finite codimensional ideals which is impossible in general. To the authors' knowledge, there are two other ways to get $H^{\circ}$ if $H$ is good enough.  One is applying the well-known Cartier-Konstant-Milnor-Moore's Theorem (\cite{MM}) if $H$ happens to be commutative. The related idea and method were generalized further (see \cite[Chapter 9]{Mo}, \cite{CM}). Another one is applying representation theoretical way if the representation category Rep-$H$ of finite dimensional modules happens to be very nice (see \cite{Ta}).

The class of affine prime Hopf algebras of Gelfand-Kirillov dimension (GK-dimension for short) has been studied very well in the past (\cite{LWZ,WLD,BZ,L1}) and the regular ones were classified at last in \cite{WLD}. An interesting fact is that all affine prime regular Hopf algebras are commutative-by-finite \cite{BC1}, that is, a finite module over a normal commutative Hopf subalgebra. This suggests that we have a chance to get the finite duals of affine prime regular Hopf algebras of GK-dimension one explicitly. As a step to test it, in this note we consider the group algebra $\k \mathbb{D}_{\infty}$ of the infinite dihedral group $\mathbb{D}_{\infty}$.

Meanwhile, in the study of divisor functions~\cite{KRRR, RS}, the following multi-dimensional integral plays an important role:
	\begin{align}\label{eq gamma}	\gamma_m(c)=\frac{1}{m!G(m+1)^2}\int_{[0,1]^m}\delta(s_1+s_2+\cdots+s_m-c)
\prod_{i<j}(s_i-s_j)^2\ ds_1\cdots ds_m.
	\end{align}
Here $\delta(\cdot)$ is the Dirac $\delta$-function, $m$ is a positive integer and $G(m+1):=(m-1)!\cdots 1!$ is the Barnes G-function. It is shown in~\cite{KRRR} that for each $m$, $\gamma_m(c)$ is a polynomial in $c$. A closely related type of polynomials (in fact, the integral of $\gamma_m$) show up in the study of moments of the Riemann zeta function~\cite{BC}. The polynomials $\gamma_m$ also find connections to Hankel determinants~\cite{BGR, DHI} and the Painlev\'e V equation~\cite{BGR}. Based on these, it is interesting to study the discrete analogue (or some variations) of $\gamma_m$. It turns out that there is a close connection between above two aspects.	
\subsection{Idea and Main results.} To describe the main results, we fix some notions at first. Recall that $m$ and $n$ are positive integers, and $U=U(m,n)=\{1,2,...,m+n\}$. For a set $X=\{x_1,...,x_m\}$ of nonnegative integers whose elements are listed in increasing order, we denote by $V_X$ the Vandermonde determinant of $X$. That is,
$$V_X=\prod_{1\le i<j\le m} \left(x_j-x_i\right).$$	 Our first result is the following identity of combinatorial nature.
	
\begin{thm}\label{thm sum}
Let $t \in \left[\frac{m(m+1)}{2}, \frac{m(m+1)}{2}+mn\right]$ be an integer, and let $t^*=t-\frac{m(m+1)}{2}$. We have
$$\sum_{\substack{X=\{x_1, ..., x_m\}\subset U\\\sum x_i = t }} V_X V_Y = G(m+1) G(n+1)\binom{mn}{t^*}.$$
Here the sum is over all subsets $X$ of $U$ whose elements' sum is $t$, and $Y=U-X$.
\end{thm}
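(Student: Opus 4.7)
The plan is to encode the sum in the language of Schur polynomials and then apply the dual Cauchy identity. First, I would set up the standard bijection between subsets $X = \{x_1 < \cdots < x_m\} \subseteq U$ and partitions $\lambda \subseteq (n^m)$ via $\lambda_i = x_{m+1-i} - (m+1-i)$ for $i = 1, \ldots, m$. Under this correspondence the arithmetic constraint $\sum x_i = t$ becomes $|\lambda| = t^*$. The shifted parts $\beta_i := \lambda_i + m - i = x_{m+1-i} - 1$ form a strictly decreasing sequence of $m$ integers in $\{0, 1, \ldots, m+n-1\}$, and $V_X$ is clearly the Vandermonde of $\{\beta_i\}$. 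The key classical fact to invoke is that the complementary set $\{0,1,\ldots,m+n-1\} \setminus \{\beta_i\}$ equals the beta-sequence $\tilde\beta_j := \lambda'_j + n - j$ of the conjugate partition $\lambda' \subseteq (m^n)$; since this complement is visibly $\{y-1 : y \in Y\}$, it follows that $V_Y$ is also the Vandermonde of $\{\tilde\beta_j\}$.

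With the bijection in hand, the Weyl dimension formula (equivalently the principal specialization of a Schur polynomial) gives
\begin{equation*}
s_\lambda(1^m) = \frac{V_X}{G(m+1)}, \qquad s_{\lambda'}(1^n) = \frac{V_Y}{G(n+1)},
\end{equation*}
so the theorem becomes equivalent to
\begin{equation*}
\binom{mn}{t^*} = \sum_{\lambda \subseteq (n^m),\, |\lambda| = t^*} s_\lambda(1^m)\, s_{\lambda'}(1^n).
\end{equation*}
This is exactly the coefficient of $z^{t^*}$ in the specialization $x_1 = \cdots = x_m = y_1 = \cdots = y_n = z^{1/2}$ of the dual Cauchy identity $\prod_{i,j}(1 + x_i y_j) = \sum_\lambda s_\lambda(x)\, s_{\lambda'}(y)$, whose left side collapses to $(1+z)^{mn}$.

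The main obstacle, such as it is, lies in the bookkeeping of the first step: carefully verifying that $V_Y$ is the Vandermonde of the conjugate beta-sequence, which requires some care with indexing and the classical Maya-diagram duality between $\lambda$ and $\lambda'$. Everything else is entirely standard once this is granted. As an alternative in the spirit of the paper's stated keyword, the identity $\binom{mn}{t^*} = \sum s_\lambda(1^m)\, s_{\lambda'}(1^n)$ admits a purely combinatorial proof via the Robinson--Schensted--Knuth correspondence, which bijects $m \times n$ zero-one matrices having exactly $t^*$ ones onto pairs of semistandard Young tableaux of conjugate shapes $\lambda$ and $\lambda'$; this route yields the same conclusion.
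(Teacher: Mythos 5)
Your proposal is correct and is in substance the same as the paper's proof: the paper establishes $V_X/G(m+1)=s_\lambda(1^m)$ and $V_Y/G(n+1)=s_{\lambda'}(1^n)$ by hand (triangular-array counting for the principal specialization, and an inductive proof that the two shapes are conjugate) and then applies dual RSK to $0$--$1$ matrices with $t^*$ ones, which is exactly the bijective form of the dual Cauchy identity you invoke. One indexing caution in your first step: the complement of $\{\beta_i\}$ in $\{0,1,\dots,m+n-1\}$ is not $\{\lambda'_j+n-j\}$ itself but its reflection $\{m+n-1-(\lambda'_j+n-j)\}$; this is harmless because the Vandermonde of a set is invariant under $s\mapsto c-s$, so your conclusion $V_Y/G(n+1)=s_{\lambda'}(1^n)$ still stands.
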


The sum $\sum V_X V_Y$ in our Theorem~\ref{thm sum} should be compared with
\begin{align}\label{eq dis gamma}
	\sum_{\substack{X=\{x_1, ..., x_m\}\subset U\\\sum x_i = t }} V_X^2
\end{align}
which is a discrete analogue of $\gamma_m\cdot G(m+1)^2$. Here we have dropped the $m!$ since each set of values for the $s_i$'s is counted $m!$ times in~\eqref{eq gamma}. To see that~\eqref{eq dis gamma} is indeed a discrete analogue of $\gamma_m\cdot G(m+1)^2$, one can use a Riemann-sum to approximate $\gamma_m$ by restricting each $s_i$ on the set $\{\frac{1}{N}, \frac{2}{N}, ..., \frac{N}{N} \}$ for some large $N$.

The precise expressions for the polynomials $\gamma_m$ are rather complicated (see~\cite{KRRR,BGR}). Thus, one expects a similar complication for its disrecte analogue~\eqref{eq dis gamma}. This is indeed the case. Using Lemma~\ref{lem vandermonde to array} (see below) we can relate the quantity $V_X$ to a counting problem of certain \textit{triangular} arrays, and consequently relate $\sum V_X^2$ to counting certain \textit{square} arrays. The later has been studied in~\cite{KRRR}, and it turns out that there is no simple expression for the value of~\eqref{eq dis gamma}, except for $t$ in a small restricted range. On the contrary, the sum $\sum V_X V_Y$ in Theorem~\ref{thm sum} cannot be related to counting square arrays. Instead, it is about counting pairs of certain triangular arrays. Such problems should in general be more difficult; however, it turns out that in our case we are able to take advantage of the fact that $X$ and $Y$ are complementary sets. This complementary structure makes the corresponding arrays of $X$ and $Y$ relate in a nice way, which enables us to make use of the \textit{Robinson-Schensted-Knuth correspondence} and get a \emph{neat} result on the right-hand side of Theorem~\ref{thm sum}.

By taking $m=n$ in Theorem~\ref{thm sum} we can prove the following result, which relates the combinatorial identity we found to the question about the finite dual $(\k \mathbb{D}_{\infty})^{\circ}$ of the infinite dihedral group algebra $\k\mathbb{D}_{\infty}$.
\begin{cor}\label{cor}
	Let $x$ be an indeterminate and $A$ be the $2m \times 2m$ matrix
	$$\left(
	\begin{array}{cccccccc}
	1 & 0 & \cdots & 0 & 1 & 0 & \cdots & 0 \\
	x & x & \cdots & x & x^{-1} & x^{-1} & \cdots & x^{-1} \\
	x^2 & 2x^2 & \cdots & 2^{m-1}x^2 & x^{-2} & 2x^{-2} & \cdots & 2^{m-1}x^{-2} \\
	x^3 & 3x^3 & \cdots & 3^{m-1}x^3 & x^{-3} & 3x^{-3} & \cdots & 3^{m-1}x^{-3} \\
	\vdots & \vdots & \cdots & \vdots & \vdots & \vdots & \cdots & \vdots \\
	x^{M} & Mx^{M} & \cdots & M^{m-1}x^{M} & x^{-M} & Mx^{-M} & \cdots & M^{m-1}x^{-M}\\
	\end{array}
	\right)$$
	where $M=2m-1$.
	Then the determinant
		\begin{align}\label{eq determinant}
			|A|=G(m+1)^2 \cdot \left(x^{-1}-x\right)^{m^2}.
		\end{align}
\end{cor}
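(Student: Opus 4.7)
The plan is to compute $|A|$ by Laplace expansion along the first $m$ columns, recognize each resulting $m\times m$ row-minor as a Vandermonde determinant times a monomial in $x$, and then apply Theorem~\ref{thm sum} with $n=m$ to collapse the resulting sum.

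Index the rows of $A$ by $U=\{1,2,\ldots,2m\}$ so that row $u$ carries the power $k=u-1$; then the $(u,j)$-entry equals $(u-1)^{j-1}x^{u-1}$ for $j\in L:=\{1,\ldots,m\}$ and $(u-1)^{j-m-1}x^{-(u-1)}$ for $j\in R:=\{m+1,\ldots,2m\}$. For a subset $X=\{x_1<\cdots<x_m\}\subset U$ with complement $Y=U\setminus X$, factoring $x^{x_i-1}$ out of row $x_i$ of the submatrix $A[X,L]$ leaves $\det((x_i-1)^{j-1})_{i,j}$, a Vandermonde in the shifted values which by translation-invariance is $V_X$; hence $\det A[X,L]=x^{\sum X-m}V_X$, and likewise $\det A[Y,R]=x^{-\sum Y+m}V_Y$.

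Laplace expansion along $L$, with sign $(-1)^{\sum X+m(m+1)/2}$, and the identity $\sum X+\sum Y=m(2m+1)$ then yield
\[
|A|=\sum_{\substack{X\subset U\\|X|=m}}(-1)^{\sum X+m(m+1)/2}\,x^{\sum X-\sum Y}\,V_X V_Y.
\]
Grouping by $T:=\sum X$ and applying Theorem~\ref{thm sum} (with $n=m$), the inner sum equals $G(m+1)^2\binom{m^2}{T^*}$, where $T^*=T-m(m+1)/2$ runs over $[0,m^2]$. Substituting $T=T^*+m(m+1)/2$ turns the sign into $(-1)^{T^*}$ (since $m(m+1)$ is even) and the monomial into $x^{2T^*-m^2}$, so the binomial theorem finishes the job:
\[
|A|=G(m+1)^2\,x^{-m^2}\sum_{T^*=0}^{m^2}\binom{m^2}{T^*}(-x^2)^{T^*}=G(m+1)^2\,x^{-m^2}(1-x^2)^{m^2}=G(m+1)^2(x^{-1}-x)^{m^2}.
\]

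The only nonroutine point is the bookkeeping of the sign and the exponent of $x$ through the shift $T\mapsto T^*$; aside from that, the corollary is an immediate consequence of Theorem~\ref{thm sum} and the classical factorization of the Vandermonde determinant.
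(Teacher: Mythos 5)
Your proof is correct and is essentially the paper's argument: both reduce the determinant, via Laplace expansion along half the columns, to $\sum_X \pm x^{(\cdot)}V_XV_Y$ and then invoke Theorem~\ref{thm sum} with $n=m$ together with the binomial theorem. The only difference is cosmetic --- the paper first rescales the rows to obtain a polynomial in $y=x^2$ and compares coefficients, leaving the Laplace expansion implicit, whereas you expand $A$ directly and track the sign and the exponent of $x$ explicitly; your bookkeeping checks out.
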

\begin{proof} Multiplying the matrix $A$ by $x,x^2,\ldots,x^{2m-1}$ on the second row, third row,$\ldots$, and the last row respectively, we get
$$B:=\left(
              \begin{array}{cccccccc}
                1 & 0 & \cdots & 0 & 1 & 0 & \cdots & 0 \\
                x^2 & x^2 & \cdots & x^2 & 1 & 1 & \cdots & 1 \\
                x^4 & 2x^4 & \cdots & 2^{m-1}x^4 & 1 & 2 & \cdots & 2^{m-1} \\
                x^6 & 3x^6 & \cdots & 3^{m-1}x^6 & 1 & 3 & \cdots & 3^{m-1} \\
                \vdots & \vdots & \cdots & \vdots & \vdots & \vdots & \cdots & \vdots \\
                x^{2M} & Mx^{2M} & \cdots & M^{m-1}x^{2M} & 1& M & \cdots & M^{m-1}\\
              \end{array}
            \right).$$
To show the result, it is enough to prove that
$$|B|=G(m+1)^2 \cdot x^{m^2-m}(1-x^2)^{m^2}.$$
By definition, $|B|$ is a polynomial of $y:=x^2$.  It is not hard to see that the coefficient of $y^{t}$ (for $\frac{m(m+1)}{2}\leq t\leq \frac{m(m+1)}{2}+m^2$) on left-hand side is exactly equal to $$(-1)^{t-\frac{m(m+1)}{2}}\sum_{\substack{X=\{x_1, ..., x_m\}\subset U\\\sum x_i = t }} V_X V_Y $$ for $n=m$ and on right-hand side is $$(-1)^{t-\frac{m(m+1)}{2}}G(m+1)^2 \binom{m^2}{t^*}.$$ Then we get the desired result by applying Theorem \ref{thm sum} directly. \end{proof}

Now we state our last result. It is known that the group algebra $\k \mathbb{D}_{\infty}$ is a prime regular Hopf algebra of GK-dimension one (see, say \cite{LWZ}). Our goal is to describe the finite dual $\k \mathbb{D}_{\infty}^{\circ}$ of $\k \mathbb{D}_{\infty}$. To this end, we define a  Hopf algebra $\k \mathbb{D}_{\infty^{\circ}}$ using generators and relations. See Subsection \ref{sub3.1} for the definition of this Hopf algebra.

Our final result is
\begin{thm}\label{thm dual} We have a Hopf isomorphism
	$$(\k \mathbb{D}_{\infty})^{\circ}\cong \k \mathbb{D}_{\infty^{\circ}}.$$
\end{thm}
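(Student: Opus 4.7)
The approach is to construct an explicit Hopf algebra homomorphism $\phi : \k \mathbb{D}_{\infty^{\circ}} \to (\k \mathbb{D}_{\infty})^{\circ}$ and show that it is both injective and surjective. Since $\k \mathbb{D}_{\infty^{\circ}}$ will be defined by generators and relations in Subsection \ref{sub3.1}, constructing $\phi$ reduces to assigning to each generator a compatible element of $(\k \mathbb{D}_{\infty})^{\circ}$. The natural source is the matrix coefficients of the finite dimensional representations of $\mathbb{D}_{\infty} = \langle r, s \mid s^2 = 1,\, srs = r^{-1}\rangle$: the four one dimensional characters sending $r \mapsto \pm 1$, $s \mapsto \pm 1$; the two dimensional simple modules $V_{\lambda}$ parameterized by unordered pairs $\{\lambda, \lambda^{-1}\} \subset \k^{\ast} \setminus \{\pm 1\}$, with $r$ acting as $\mathrm{diag}(\lambda, \lambda^{-1})$ and $s$ swapping the basis vectors; and their indecomposable Jordan block thickenings of dimension $2m$ for each $m \geq 1$. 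Once $\phi$ is set up on generators, compatibility with the product, coproduct, antipode, and counit should follow from the standard identities satisfied by matrix coefficients.

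For injectivity, the key ingredient is Corollary \ref{cor}. The matrix $A$ there is, up to row scaling by powers of $x$, the evaluation matrix of the $2m$ linear functionals $r^n \mapsto n^k x^n$ and $r^n \mapsto n^k x^{-n}$ (for $0 \le k \le m-1$) at the elements $r^0, r^1, \ldots, r^{2m-1}$; these are precisely the matrix coefficients of the size $2m$ indecomposable representation of $\mathbb{D}_{\infty}$ with eigenvalue pair $\{x, x^{-1}\}$. The non-vanishing $|A| = G(m+1)^2 (x^{-1} - x)^{m^2} \neq 0$ for $x \neq \pm 1$ certifies the linear independence of these $2m$ functionals on the span of $\{r^i\}_{0 \le i \le 2m-1}$, and therefore the linear independence of the corresponding matrix coefficients in $(\k \mathbb{D}_{\infty})^{\circ}$. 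Combining this with the independence gained by varying $\lambda$ and by toggling the action of $s$ should yield $\ker \phi = 0$.

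For surjectivity, I plan to exploit the commutative-by-finite structure of $\k \mathbb{D}_{\infty}$: the subalgebra $\k[r, r^{-1}]$ is a normal commutative Hopf subalgebra of index two. Any cofinite ideal of $\k \mathbb{D}_{\infty}$ restricts to a cofinite ideal of $\k[r, r^{-1}]$, whose finite dual is classically understood, and a cofinite ideal of $\k \mathbb{D}_{\infty}$ can be reconstructed from such a restriction together with the crossed action of $s$. The finite dimensional quotients that emerge are exactly those whose restriction to $\langle r \rangle$ is a sum of generalized eigenspaces at a complementary pair $\{\lambda, \lambda^{-1}\}$. The main obstacle, and the place where Corollary \ref{cor} is indispensable, is to verify that the family of representations listed in the first paragraph is exhaustive; in particular, one must rule out exotic mixings of Jordan structures at $\lambda$ and $\lambda^{-1}$ producing matrix coefficients outside the image of $\phi$. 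The dimension count furnished by $|A| \neq 0$ matches the expected rank of the relevant space of matrix coefficients, which should close the argument.
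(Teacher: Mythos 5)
Your proposal follows essentially the same route as the paper: the functionals you describe as matrix coefficients of the Jordan-block representations are exactly the paper's generators $E^s\Phi_{\lambda}$, $E^s\Phi_{\lambda^{-1}}$, $E^s\Psi_{\lambda}$, $E^s\Psi_{\lambda^{-1}}$, the surjectivity argument via restricting a cofinite ideal to $\k[g,g^{-1}]$ and reducing to a single symmetric eigenvalue pair is Claims 1--3 of Proposition~\ref{prop12}, and Corollary~\ref{cor} is invoked for precisely the same purpose (linear independence of the $4r$ functionals on the $4r$-dimensional quotient, which by dimension count closes the surjectivity and, reused, gives injectivity). The only points the paper spells out that you defer are the explicit verification that the generators satisfy the coalgebra relations (computed directly on the dual basis $f_{i,j}$), the degenerate factors $(g\mp 1)^r$ and $g^t$ of $p(g)$ handled by the same method plus the Chinese Remainder Theorem, and the Diamond Lemma to certify that $\{F^i\phi_{\lambda},F^i\psi_{\lambda}\}$ spans the source before concluding injectivity.
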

\subsection{Organization and a remark} The note is organized in a quite simple way: In Section 2 we give the proof of Theorem \ref{thm sum} and the Section 3 is devoted to prove the Theorem \ref{thm dual}. We want to say that in a forthcoming work~\cite{L} the second author investigates the finite dual of other types of prime regular Hopf algebras of GK-dimension one.


\section{Proof of Theorem \ref{thm sum}}
In this section we prove Theorem \ref{thm sum}.

\subsection{Some Lemmas.}
\begin{lem}\label{lem vandermonde sum}
Let $A=\{a_1, ..., a_k\}$ be a set of integers. We have
\begin{align}\label{eq vandermonde sum}
	\sum_{\substack{B=\{b_1,...,b_{k-1}\}\subset \mathbb Z\\a_1<b_1\le a_2<b_2\le \cdots <b_{k-1}\le a_k}} V_B = \frac{V_A}{(k-1)!}.
\end{align}
\end{lem}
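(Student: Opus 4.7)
The plan is to recast the left-hand side as a single determinant identity using three standard tools: the Vandermonde-as-determinant formula, multilinearity of $\det$ in its rows, and the hockey-stick identity for binomial coefficients. First, I would rewrite $V_B = \det(b_i^{j-1})_{1\le i,j\le k-1}$ and change the column basis from monomials to binomial coefficients. Since $b^{j-1}=(j-1)!\binom{b}{j-1}$ plus terms of lower degree in $b$, an upper-triangular column operation (which does not alter the determinant) together with pulling out the diagonal scalars yields
\[
V_B \;=\; G(k)\cdot\det\!\left(\binom{b_i}{j-1}\right)_{1\le i,j\le k-1}.
\]

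Next, I would exploit multilinearity. Row $i$ of the matrix above depends only on $b_i$, and the admissible sets $\{a_i+1,\ldots,a_{i+1}\}$ for the different $b_i$'s are independent of one another. Summing each row separately and invoking the hockey-stick identity $\sum_{b=a+1}^{a'}\binom{b}{j-1}=\binom{a'+1}{j}-\binom{a+1}{j}$ then gives
\[
\sum_B V_B \;=\; G(k)\cdot\det\!\left(\binom{a_{i+1}+1}{j}-\binom{a_i+1}{j}\right)_{1\le i,j\le k-1}.
\]

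Then I would recognize this $(k-1)\times(k-1)$ matrix of differences as a cofactor of the $k\times k$ matrix $P$ defined by $P_{ij}:=\binom{a_i+1}{j-1}$ for $1\le i,j\le k$. Performing the row operations $R_i\mapsto R_i-R_{i-1}$ for $i=k,k-1,\ldots,2$ (which preserves $\det P$) produces a matrix whose first column is $(1,0,\ldots,0)^T$ and whose lower-right $(k-1)\times(k-1)$ block is exactly the matrix of differences, so expansion along the first column identifies this cofactor with $\det P$. Since $\binom{a+1}{j-1}$ is a polynomial in $a$ of degree $j-1$ with leading coefficient $1/(j-1)!$, column operations identify $\det P$ with a Vandermonde up to the scalar $\prod_{j=1}^k(j-1)!=G(k+1)$, so $\det P = V_A/G(k+1)$. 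Assembling everything yields $\sum_B V_B = G(k)\cdot V_A/G(k+1) = V_A/(k-1)!$, as claimed.

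The main obstacle is bookkeeping rather than conceptual: each of the three tools is routine, but one must carefully track the two change-of-basis scalars between monomials and binomial coefficients — one applied on the $b_i$-rows and another on the $a_i$-columns — and verify that they telescope to the clean factor $1/(k-1)!$ on the right-hand side. A sanity check in the small cases $k=2,3$ confirms that the constants collapse exactly as predicted by $G(k+1)/G(k)=(k-1)!$.
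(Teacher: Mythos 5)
Your proof is correct, and its skeleton --- write $V_B$ as a determinant, sum over each $b_i$ separately by multilinearity in the rows, telescope the resulting sums, and recognize the $(k-1)\times(k-1)$ matrix of differences as a cofactor of a bordered $k\times k$ Vandermonde-type determinant --- is the same as the paper's. The one genuine difference is the choice of polynomial basis at the summation step: the paper sums the monomials $c^j$ directly, obtaining Faulhaber-type polynomials $F_j(b,0)$ with leading term $b^{j+1}/(j+1)$, and then must argue that only these leading terms survive because ``all other terms cancel out by elementary row operations''; you instead change basis to the binomial coefficients $\binom{b}{j-1}$ first, so that the hockey-stick (telescoping Pascal) identity makes each row sum exact and no cancellation argument is needed. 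This buys you a cleaner and more self-contained middle step --- the leading-term cancellation is the least transparent point of the paper's argument --- at the modest cost of tracking the two change-of-basis constants $G(k)$ and $G(k+1)$, which you correctly observe collapse to the factor $1/(k-1)!$.
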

\proof This is a discrete analogue of Lemma 4.6 in~\cite{KRRR}. To proceed, we write
$$V_B=\det\begin{pmatrix}
1 & 1 & \cdots & 1\\
b_1 & b_2 & \cdots & b_{k-1}\\
\vdots & \vdots & \cdots & \vdots \\
b_1^{k-2} & b_2^{k-2} & \cdots & b_{k-1}^{k-2}
\end{pmatrix},$$
and sum over $b_1, b_2, ..., b_{k-1}$ as in the left-hand side of~\eqref{eq vandermonde sum}. We then arrive at
$$\det\begin{pmatrix}
F_0(a_2, a_1) & F_0(a_3, a_2) & \cdots & F_0(a_k, a_{k-1})\\
F_1(a_2, a_1) & F_1(a_3, a_2) & \cdots & F_1(a_k, a_{k-1})\\
\vdots & \vdots & \cdots & \vdots \\
F_{k-2}(a_2, a_1) & F_{k-2}(a_3, a_2) & \cdots & F_{k-2}(a_k, a_{k-1})
\end{pmatrix},$$
where $F_j(b,a)=\sum_{a<c\le b} c^j = F_j(b,0)-F_j(a,0)$. It is well known that $F_j(b,0)$ is a polynomial in $b$, whose leading term is $\frac{b^{j+1}}{j+1}$. We observe that in the above determinant only the leading term of each $F_j$ makes contribution, while all other terms cancel out by elementary row operations. We thus obtain
\begin{align*}
\det\begin{pmatrix}
a_2-a_1 & a_3- a_2 & \cdots & a_k- a_{k-1}\\
\frac{a_2^2}{2} - \frac{a_1^2}{2} & \frac{a_3^2}{2} - \frac{a_2^2}{2} & \cdots & \frac{a_k^2}{2} - \frac{a_{k-1}^2}{2}\\
\vdots & \vdots & \cdots & \vdots \\
\frac{a_2^{k-1}}{k-1} - \frac{a_1^{k-1}}{k-1} & \frac{a_3^{k-1}}{k-1} - \frac{a_2^{k-1}}{k-1}& \cdots & \frac{a_k^{k-1}}{k-1} - \frac{a_{k-1}^{k-1}}{k-1}
\end{pmatrix}.
\end{align*}
We then write each column as a difference of two column vectors, and expand the determinant as a sum of determinants. It follows that this is exactly an expansion of the following determinant according to the first row
\begin{align*}
\det\begin{pmatrix}
1& 1 & \cdots & 1 \\
a_1 & a_2 & \cdots & a_k\\
\frac{a_1^2}{2} &  \frac{a_2^2}{2} & \cdots & \frac{a_k^2}{2}\\
\vdots & \vdots & \cdots & \vdots \\
\frac{a_1^{k-1}}{k-1} &  \frac{a_2^{k-1}}{k-1}& \cdots & \frac{a_k^{k-1}}{k-1}
\end{pmatrix}.
\end{align*}
Clearly, this is the right-hand side of~\eqref{eq vandermonde sum}. \qed

\begin{lem}\label{lem vandermonde to array}
Let	$X=\{x_1,...,x_m\}$ be a set of positive integers, and $V_X$ be the Vandermonde determinant of $X$. Then  $\frac{V_X}{G(m+1)}$ is equal to the number of triangular arrays satisfying the following.
\begin{itemize}
	\item Each array $A$ is of the form \; $\begin{matrix}
	* & * & \cdots & * & x_m \\
	* & \cdots & * & x_{m-1} \\
	\vdots &  & \iddots \\
	* & x_2 \\
	x_1	
	\end{matrix}$
	\item In each row, we have $\le$ from left to right.
	\item In each column, we have $>$ from top to bottom.
	\item Each entry is a positive integer.
\end{itemize}
\end{lem}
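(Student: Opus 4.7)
The plan is to iterate Lemma \ref{lem vandermonde sum} along the anti-diagonals of the triangular array. Write $a_{i,j}$ for the entry in row $i$ and position $j$ of that row, so that the prescribed values are $a_{i,m-i+1}=x_{m-i+1}$, lying on the outermost anti-diagonal $i+j=m+1$. For each $k\in\{2,3,\ldots,m+1\}$ let
\[
D_k:=\{a_{i,k-i}\,:\,1\le i\le k-1\}
\]
be the $k$-th anti-diagonal; then $|D_k|=k-1$, the outermost $D_{m+1}$ coincides with $X$, and the innermost $D_2$ is a single entry.

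First I would verify that each consecutive pair $(D_k,D_{k+1})$ interlaces in exactly the pattern demanded by Lemma \ref{lem vandermonde sum}. For a fixed row $i$, the positions $k-i$ and $k+1-i$ lie in $D_k$ and $D_{k+1}$ respectively, so the weakly increasing row condition gives $a_{i,k-i}\le a_{i,k+1-i}$. For a fixed column $k-i$, rows $i$ and $i+1$ give an element of $D_k$ and an element of $D_{k+1}$, so the strictly decreasing column condition gives $a_{i,k-i}>a_{i+1,k-i}$. Ordering both anti-diagonals increasingly, these two inequalities combine into the exact interlacing hypothesis $a_1<b_1\le a_2<b_2\le\cdots<b_{k-1}\le a_k$ of Lemma \ref{lem vandermonde sum}, with $A=D_{k+1}$ of size $k$ and $B=D_k$ of size $k-1$. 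The positivity of the entries is automatic, since $X$ consists of positive integers and the interlacing preserves positivity inward.

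With the interlacing established, the second step is a telescoping sum. The triangular arrays are in bijection with admissible tuples $(D_m,D_{m-1},\ldots,D_2)$ where $D_{m+1}=X$ is fixed and each $D_k$ interlaces $D_{k+1}$ as above. Using $V_{D_2}=1$ (an empty product) and applying Lemma \ref{lem vandermonde sum} once for each $k=2,3,\ldots,m$ (each application peels off one inner anti-diagonal and contributes a factor $(k-1)!$ in the denominator), the count collapses to
\[
\sum_{D_m,\ldots,D_2}1\;=\;\frac{V_{D_{m+1}}}{1!\cdot 2!\cdots(m-1)!}\;=\;\frac{V_X}{G(m+1)},
\]
which is the formula stated in the lemma.

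The conceptually nontrivial part is the first step: neither row-by-row nor column-by-column peeling of the array produces the interlacing shape that Lemma \ref{lem vandermonde sum} requires. It is the anti-diagonals that do, and once this is noticed the remainder of the argument is a mechanical telescoping of the previous lemma.
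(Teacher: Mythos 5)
Your proof is correct and follows essentially the same route as the paper's: the paper also writes the count as an iterated sum over the anti-diagonals (innermost entry first, with exactly the interlacing constraints $a_1<b_1\le a_2<\cdots$ between consecutive anti-diagonals) and telescopes Lemma~\ref{lem vandermonde sum} to accumulate the denominator $1!\,2!\cdots(m-1)!=G(m+1)$. Your write-up merely makes the interlacing verification and the ordering of each anti-diagonal slightly more explicit than the paper does.
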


Before we prove Lemma~\ref{lem vandermonde to array}, let us give an example as an illumination. Let $X=\{1, 3, 4\}$, so $m=3$, $G(m+1)=2,$ and $V_X=6.$ We thus have $\frac{V_X}{G(m+1)}=3.$ On the other hand, the number of arrays in the lemma is also $3$, and they are:
$$\begin{matrix}
3 & 4 & 4 \\
2 & 3 \\
1	
\end{matrix} \qquad \qquad \begin{matrix}
4 & 4 & 4 \\
2 & 3 \\
1	
\end{matrix} \qquad \qquad \begin{matrix}
4 & 4 & 4 \\
3 & 3 \\
1	
\end{matrix}$$

\proof  This is also a discrete analogue of a result in~\cite{KRRR}. The proof is straightforward once Lemma~\ref{lem vandermonde sum} is at hand. Denoting a general such array by $$\begin{matrix}
a_{1,1} & a_{1,2} & \cdots & a_{1,m-1} & x_m \\
a_{2,1} & \cdots & a_{2,m-2} & x_{m-1} \\
\vdots &  & \iddots \\
a_{m-1,1} & x_2 \\
x_1	
\end{matrix}$$ we may write the number of such arrays as a multiple sum
$$
\sum_{\substack{x_1<a_{m-1,1}\le x_2\\x_2<a_{m-2,2}\le x_3\\\cdots\\x_{m-1}<a_{1,m-1} \le x_m }} \cdots \sum_{\substack{a_{2,1}\,,a_{1,2}\\a_{3,1}<a_{2,1}\le a_{2,2}\\a_{2,2}<a_{1,2}\le a_{1,3}}}\  \sum_{\substack{a_{1,1}\\a_{2,1}<a_{1,1}\le a_{1,2}}} 1.
$$
We then evaluate the innermost single sum and see that it is the Vandermonde of $\{a_{2,1}\,,a_{1,2}\}$. After that, we evaluate the double sum over $a_{2,1}$ and $a_{1,2}$ using Lemma~\ref{lem vandermonde sum}. The pattern clearly continues. \qed

To state our next lemma, we recall that a \textit{semi-standard Young tableau} (SSYT) is a Young diagram such that in each row we have $\le$ from left to right, and in each column $<$ from top to bottom. Moreover, we say a Young diagram is of \textit{shape} $(\lambda_1, \lambda_2, ..., \lambda_k)$ for some non-negative integers $\lambda_1\ge \lambda_2\ge \cdots \ge \lambda_k$, if it has $\lambda_i$ boxes in the $i$-th row for each $i$. For example, a Young diagram of shape $(5,3, 3)$ looks like
$$\begin{ytableau} ~& ~& ~& ~&~\\~&~&~\\~&~&~\end{ytableau} $$

\begin{lem}\label{lem array to ssyt}
Let $X\subset U$. Then $\frac{V_X}{G(m+1)}$  is equal to the number of SSYT satisfying
\begin{itemize}
	\item The shape of the SSYTs is $(\tilde{x}_m, \tilde{x}_{m-1},...,\tilde{x}_1 )$, where $\tilde{x_i}=x_i-i$.
	\item The entries in each SSYT belong to the range $\{1,2,...,m\}$.
\end{itemize}
\end{lem}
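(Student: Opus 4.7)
The plan is to combine Lemma 2.3 with the Weyl dimension formula, which provides a well-known representation-theoretic interpretation of the quantity $V_X/G(m+1)$. Since Lemma 2.3 already identifies this quantity with the number of triangular arrays, it suffices to verify that it also counts the SSYTs in question.

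Specifically, I would recall that for any partition $\lambda=(\lambda_1\geq\cdots\geq\lambda_m\geq 0)$, the number of SSYTs of shape $\lambda$ with entries in $\{1,\ldots,m\}$ equals the Schur polynomial evaluation $s_\lambda(1,1,\ldots,1)$, given by the Weyl dimension formula
$$\prod_{1 \leq i < j \leq m} \frac{\lambda_i - \lambda_j + j - i}{j - i}.$$
I would then specialize to $\lambda_k := \tilde{x}_{m-k+1} = x_{m-k+1}-(m-k+1)$ and carry out two short computations. For the numerator, the shift terms cancel and a direct expansion yields $\lambda_i - \lambda_j + (j-i) = x_{m-i+1} - x_{m-j+1}$; after reindexing via $k=m-i+1$, $l=m-j+1$, the product over $i<j$ becomes $\prod_{1\leq l<k\leq m}(x_k-x_l)=V_X$. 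For the denominator, grouping pairs $(i,j)$ by the value $d=j-i$ (with $m-d$ pairs contributing) gives
$$\prod_{1\leq i<j\leq m}(j-i)=\prod_{d=1}^{m-1}d^{\,m-d}=\prod_{k=1}^{m-1}k!=G(m+1).$$
Combining these yields the SSYT count of $V_X/G(m+1)$, which by Lemma 2.3 equals the number of triangular arrays, proving the equality.

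I expect no substantive obstacle. The bulk of the combinatorial work is already contained in Lemma 2.3, and here one is simply matching two different combinatorial interpretations of the same number $V_X/G(m+1)$. This is in line with the authors' remark that the proof is straightforward given Lemma 2.3. An alternative, more hands-on approach would be to exhibit an explicit bijection between the triangular arrays and the SSYTs via Gelfand--Tsetlin patterns (one subtracts suitable shifts from the array entries to convert the strict-column/weakly-increasing-row condition into the weak interlacing required by a GT pattern, then uses the classical GT--SSYT correspondence). This works but is notationally heavier, and the representation-theoretic route above is the cleanest way to leverage Lemma 2.3.
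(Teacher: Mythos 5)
Your proof is correct, but it takes a genuinely different route from the paper. You establish the SSYT count directly via the principal specialization $s_\lambda(1^m)=\prod_{i<j}\frac{\lambda_i-\lambda_j+j-i}{j-i}$, and your two computations check out: with $\lambda_k=\tilde{x}_{m-k+1}$ the shifts cancel to give numerator $V_X$, and $\prod_{1\le i<j\le m}(j-i)=\prod_{d=1}^{m-1}d^{m-d}=G(m+1)$. Note that this makes your invocation of the triangular-array lemma (Lemma~\ref{lem vandermonde to array}) superfluous for proving Lemma~\ref{lem array to ssyt} itself; the Weyl/hook-content route is self-contained. The paper instead proves the lemma by exhibiting an explicit two-step bijection between the triangular arrays of Lemma~\ref{lem vandermonde to array} and the SSYTs: subtract $k$ from each entry of the $k$-th row from the bottom and pad to a square array $\tilde{A}$ with weakly increasing rows and weakly decreasing columns, then read the $(i,j)$-entry of $\tilde{A}$ as the rightmost position of the value $j$ in row $i$ of the SSYT (an idea borrowed from Keating--Rodgers--Roditty-Gershon--Rudnick). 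Your approach is shorter and leans on a classical closed-form evaluation; the paper's approach is elementary and self-contained, keeps the ``discrete analogue of KRRR'' narrative, and makes Lemma~\ref{lem vandermonde to array} do real work. Since only the count (not the bijection) is used downstream in Lemmas~\ref{lem complement}--\ref{lem transpose} and in the RSK step, your argument would serve the rest of the paper equally well.
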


Note that such SSYT has size $\sum \tilde{x_i} = t^*$.

\proof In view of Lemma~\ref{lem vandermonde to array}, we only need to build a bijection between the arrays in Lemma~\ref{lem vandermonde to array} and the SSYTs in Lemma~\ref{lem array to ssyt}. It is done in two steps.

First, for a given array $A$, we subtract $k$ from each entry in the $k$-th row counting from the bottom (do it for every row); and then we shift it and fill in $0$'s to make it a square array, say $\tilde{A}$, as follows.
$$\begin{matrix}
* & * & * & \cdots & * & \tilde{x}_m \\
0 & * & * & \cdots & * & \tilde{x}_{m-1} \\
0 & 0 &  * & \cdots & * & \tilde{x}_{m-2}\\
\vdots & \vdots & \cdots & & \vdots &\vdots\\
0 & 0 & \cdots & 0 & * & \tilde{x}_2 \\
0 & 0 & \cdots & 0 & 0 & \tilde{x}_1	
\end{matrix}$$
Note that such arrays satisfy
\begin{itemize}
	\item In each row, we have $\le$ from left to right.
\item In each column, we have $\ge$ from top to bottom.
\item Each entry is a non-negative integer.
\end{itemize}

Next, we use an idea in~\cite{KRRR} to build a bijection between such arrays $\tilde{A}$ with the said SSYTs. The idea is that the value of the $(i,j)$-th entry in an array $\tilde{A}$ corresponds to the rightmost place of the number $j$ appearing in the $i$-th row of the corresponding SSYT; if the $(i,j-1)$-th entry and the $(i,j)$-th entry are the same in an array, then $j$ does not appear in the $i$-th row of the SSYT. Thus, the fact that the entries in SSYTs live in $\{1,2,...,m\}$ agrees with the fact that $\tilde{A}$ has $m$ columns. Moreover, the last column of $\tilde{A}$ agrees with the shape of the SSYTs. Below is an example of this correspondence.
$$A: \quad \begin{matrix}
5 & 7 & 7  \\
3 & 5  \\
 1\\
\end{matrix} \qquad \qquad\tilde{A}:\quad \begin{matrix}
2 & 4 & 4  \\
0 & 1 & 3 \\
0 & 0 &  0\\
\end{matrix} \qquad \qquad \text{SSYT}: \quad\begin{ytableau} 1&1&2&2\\2&3&3\end{ytableau} $$
In this example, in $\tilde{A}$ the first row first column (i.e., the (1,1) entry) is $2$; this means that in the first row of the SSYT, the rightmost position of $1$ is the second box, and thus we have $1$ in the first two  boxes. The $(1,2)$ entry is $4$ in $\tilde{A}$; that means in the first row of the SSYT, the rightmost position of $2$ is the fourth box, and so we fill in $2$ till the fourth box.

It is straightforward to verify that this correspondence between the arrays and the SSYTs is a bijection. \qed

\begin{lem}\label{lem complement}
	Let $Y=\{y_1,...,y_n\}\subset U$. Then $\frac{V_Y}{G(n+1)}$ is the number of SSYTs satisfying
	\begin{itemize}
		\item The shape of the SSYTs is $(m+1-y_1, \ m+2-y_2, ..., \ m+n-y_n)$.
		\item The entries in each SSYT belong to the range $\{1,2,...,n\}$.
	\end{itemize}
\end{lem}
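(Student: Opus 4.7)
\emph{Proof plan.} The cleanest strategy is to recognize the number of SSYTs in question as a principal specialization of a Schur polynomial and evaluate it via Weyl's formula. For any partition $\lambda$ with at most $n$ parts, the number of SSYTs of shape $\lambda$ with entries in $\{1,\ldots,n\}$ is
$$s_\lambda(1,\ldots,1)\;=\;\prod_{1\leq i<j\leq n}\frac{\lambda_i-\lambda_j+j-i}{j-i}.$$
Applying this with $\lambda_i=m+i-y_i$, the substitution is remarkably clean: $\lambda_i-\lambda_j+j-i=(m+i-y_i)-(m+j-y_j)+(j-i)=y_j-y_i$, so the numerator collapses to $\prod_{1\leq i<j\leq n}(y_j-y_i)=V_Y$. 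A short rearrangement identifies the denominator $\prod_{1\leq i<j\leq n}(j-i)$ with $\prod_{d=1}^{n-1}d^{n-d}=\prod_{k=1}^{n-1}k!=G(n+1)$, finishing the argument.

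A more combinatorial alternative, in the bijective spirit of Lemmas \ref{lem vandermonde to array} and \ref{lem array to ssyt}, is to apply Lemma \ref{lem array to ssyt} directly to $Y$ (with $m$ replaced by $n$) to obtain $V_Y/G(n+1)$ as the number of SSYTs of the auxiliary shape $\mu':=(y_n-n,\,y_{n-1}-(n-1),\,\ldots,\,y_1-1)$ with entries in $\{1,\ldots,n\}$. A direct check shows that $\mu'$ fits inside the $n\times m$ rectangle and that the target shape $\mu:=(m+1-y_1,\ldots,m+n-y_n)$ is precisely its complement in that rectangle, i.e.\ $\mu_i=m-\mu'_{n+1-i}$. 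The lemma then reduces to the classical symmetry $s_{\mu'}(1^n)=s_\mu(1^n)$ for complementary shapes inside a rectangle, which itself follows by relabelling $i\mapsto n+1-i$ in Weyl's specialization formula.

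The main obstacle is really a meta-one: choosing which route to present. The algebraic route is essentially a one-line computation once Weyl's specialization formula is invoked, but it does not quite match the purely bijective style of the preceding lemmas. The bijective route postpones Weyl's formula but requires an explicit rotation-and-reversal bijection between SSYTs of $\mu'$ and $\mu$ inside an $n\times m$ rectangle; this is classical but slightly delicate, combining a $180^\circ$ rotation with the entry involution $k\mapsto n+1-k$. Either way the argument terminates with the desired count $V_Y/G(n+1)$, giving the lemma.
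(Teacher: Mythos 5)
Your argument is correct, but it takes a genuinely different route from the paper. The paper's proof is a two-line trick: it observes that $V_Y$ equals the Vandermonde determinant of the reflected set $\{m+n+1-y_n,\ \ldots,\ m+n+1-y_1\}$, which is again an $n$-element subset of $U$, and then applies Lemma~\ref{lem array to ssyt} (with $m$ replaced by $n$) to that reflected set; its shifted shape is exactly $(m+1-y_1,\ldots,m+n-y_n)$, so no further input is needed. Your first route instead invokes the principal specialization $s_\lambda(1^n)=\prod_{i<j}\frac{\lambda_i-\lambda_j+j-i}{j-i}$; the computation $\lambda_i-\lambda_j+j-i=y_j-y_i$ and the identification $\prod_{i<j}(j-i)=G(n+1)$ are both right, so this works, but note that Lemmas~\ref{lem vandermonde to array} and~\ref{lem array to ssyt} together already constitute a self-contained combinatorial proof of that very specialization formula, so you are importing from outside precisely the machinery the paper is building internally. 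Your second route is closer in spirit but still differs: applying Lemma~\ref{lem array to ssyt} to $Y$ directly lands on the complementary shape $\mu'$ inside the $n\times m$ rectangle, and you then need the classical symmetry $s_{\mu'}(1^n)=s_{\mu}(1^n)$ for box-complements (via the rotation-plus-relabelling bijection or Weyl's formula again). The paper's reflection of the underlying \emph{set}, rather than complementation of the \emph{shape}, sidesteps that extra step entirely. All three arguments are valid; the paper's is the most economical given what has already been established.
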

\proof We first observe that $V_Y$ is equal to the Vandermonde determinant of the set
$$\{m+n+1-y_n,\ m+n+1-y_{n-1}, ...,\ m+n+1-y_1\}.$$
The result then follows from Lemma~\ref{lem array to ssyt}. \qed

We also require the concept of a \textit{transpose shape}, which has the same spirit as a transpose of a matrix; namely, the $i$-th row becomes the $i$-th column for each $i$. For example, the transpose shape of $(5,3,3)$ is $(3,3,3,1,1)$:
$$\begin{ytableau} ~& ~& ~& ~&~\\~&~&~\\~&~&~\end{ytableau} \qquad \qquad \qquad \begin{ytableau} ~& ~& ~\\~&~&~\\~&~&~ \\~ \\~\end{ytableau} $$

\begin{lem}\label{lem transpose}
	Let $X=\{x_1,...,x_m\}$  and $Y=\{y_1,...,y_n\}$, where $X\cup Y=U$. Then the shape $(m+1-y_1, \ m+2-y_2, ..., \ m+n-y_n)$ is the transpose of the shape $(\tilde{x}_m, \tilde{x}_{m-1},...,\tilde{x}_1 )$, where $\tilde{x_i}=x_i-i$ as in Lemma~\ref{lem array to ssyt}.
\end{lem}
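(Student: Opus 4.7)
The plan is to verify the identity $\mu_i = \lambda'_i$ directly from the definition of the transpose of a partition, where $\lambda = (\tilde{x}_m, \tilde{x}_{m-1},\ldots,\tilde{x}_1)$ and $\mu = (m+1-y_1,\ldots,m+n-y_n)$.

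First I would confirm that both $\lambda$ and $\mu$ are genuine partitions fitting inside an $m\times n$ (resp.\ $n\times m$) rectangle: monotonicity follows from $x_{i+1}>x_i$ and $y_{i+1}>y_i$, and the bounds $\lambda_1=x_m-m\le n$ and $\mu_1=m+1-y_1\le m$ are immediate. This just makes sure that the transpose statement is meaningfully posed.

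Next comes the main computation. Writing $\lambda_j=x_{m+1-j}-(m+1-j)$ and reindexing with $k=m+1-j$, I would express the transpose as
\begin{equation*}
\lambda'_i \;=\; \#\{j\in[1,m]:\lambda_j\ge i\} \;=\; \#\{k\in[1,m]:x_k\ge i+k\}.
\end{equation*}
The crucial observation is that since $X$ and $Y$ partition $U$, the condition ``$x_k\ge i+k$'' says that among the first $i+k-1$ elements of $U$ at most $k-1$ lie in $X$, which is equivalent to saying at least $i$ of them lie in $Y$, i.e.\ $y_i\le i+k-1$, i.e.\ $k\ge y_i-i+1$. Counting such $k$ in $[1,m]$ gives exactly $m-(y_i-i+1)+1=m+i-y_i=\mu_i$, which is the desired identity.

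This is essentially a direct calculation, so there is no serious obstacle; the only thing to be careful about is the bookkeeping of indices (the reversal $\tilde{x}_{m+1-j}$ vs.\ $\tilde{x}_i$, and the off-by-one when converting between ``$x_k\ge i+k$'' and ``$y_i\le i+k-1$''). A reader who prefers a pictorial argument can recover the same result by encoding $X\subset U$ as a lattice path from $(0,0)$ to $(n,m)$ (step up at position $k$ if $k\in X$, right if $k\in Y$); then $\lambda$ records the row lengths and $\mu$ the column lengths of the Young diagram cut out by the path, making the transposition manifest.
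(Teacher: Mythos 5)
Your proof is correct, but it proceeds quite differently from the paper. You compute the conjugate partition directly from the formula $\lambda'_i=\#\{j:\lambda_j\ge i\}$, reindex to get $\#\{k\in[1,m]:x_k\ge i+k\}$, and then use the complementarity of $X$ and $Y$ inside $U$ to convert the rank condition on $X$ into the condition $y_i\le i+k-1$, yielding $\lambda'_i=m+i-y_i=\mu_i$; the index bookkeeping in your argument checks out, including the boundary case $y_i=m+i$ where both sides vanish. The paper instead argues by induction on $m+n$, removing the largest element $k+1$ of $U$ and splitting into cases according to whether it lies in $X$ or $Y$; in the case $k+1\in X$ the new largest part of $\lambda$ equals $n$, so the transpose is obtained by adding $1$ to every column length, and the authors must then fuss over how many trailing zeros to carry so that the resulting tuple has exactly $n$ components. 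Your direct computation buys a cleaner, non-inductive argument that sidesteps that normalization issue entirely (and your closing lattice-path picture is the standard way to make the duality transparent), while the paper's induction is more self-contained in that it never invokes the counting formula for the conjugate. Either route is a complete proof.
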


\proof We prove it by induction on $m+n$. The lemma is clearly true for $m=n=1$. Now assume it is true for all pairs of positive integers $(i,j)$ with $i+j\le k$. We shall prove that the statement holds true for all pairs $(m,n)$ with $m+n=k+1$. We take an arbitrary such pair $(m,n)$, and keep in mind that that $X$ is a subset of $\{1,2,...,k+1\}$ with size $m$, and $Y$ is the complement of $X$.

We first assume that $k+1\in X$, and separate into two cases.

\textit{Case 1.} $m=1$. That is, $X=\{k+1\}$ and $Y=\{1,2,...,k\}$. In this case we do not even need the induction hypothesis. One simply verify that the shape $(m+1-y_1, \ m+2-y_2, ..., \ m+n-y_n)$ (here $m=1, n=k, y_i=i$ for all $i$) is just $(1,1,...,1)$ with $k$ components. On the other hand, the shape $(\tilde{x}_m, \tilde{x}_{m-1},...,\tilde{x}_1 )$ is just $(k)$. Clearly they are transpose to each other.

\textit{Case 2.} $m\ge 2$. So $x_{m}=k+1.$ For convenience we write $m'=m-1$ and $X'=\{x_1,..., x_{m'}\}$. Since $X'\cup Y=\{1,2,...,k\}$, we may use the induction hypothesis and conclude that the transpose of the shape $(\tilde{x}_{m'},...,\tilde{x}_1 )$  is the shape $(m'+1-y_1, \ m'+2-y_2, ..., \ m'+n-y_n)$. We need to show that
the transpose of $(\tilde{x}_{m}, \ \tilde{x}_{m'}, ..., \ \tilde{x}_1 )$  is $(m+1-y_1, \ m+2-y_2, ..., \ m+n-y_n)$.

Since $(\tilde{x}_{m}, \ \tilde{x}_{m'}, ..., \ \tilde{x}_1 ) = (k+1-m,  \ \tilde{x}_{m'}, ..., \ \tilde{x}_1 ) = (n,  \ \tilde{x}_{m'}, ..., \ \tilde{x}_1 )$, its transpose should be obtained by modifying the transpose $T$ of $(\tilde{x}_{m'},...,\tilde{x}_1 )$ by adding $1$ to each component of $T$. But here we need to be careful, as a shape is not uniquely determined by a tuple of integers, the reason being that one could add an arbitrary number of $0$'s to the tail of a tuple and does not change the shape. Therefore, when we choose a tuple for $T$ and add $1$ to its each component, at last we need to make sure that the resulting shape has the number of rows the same as the number of columns of  $(n, \tilde{x}_{m'},...,\tilde{x}_1 )$. This number is clearly $n$. We then verify that by adding $1$ to each component of $(m'+1-y_1, \ m'+2-y_2, ..., \ m'+n-y_n)$ (this is a tuple for the transpose $T$ of $(\tilde{x}_{m'},...,\tilde{x}_1 )$, by induction hypothesis) we obtain $(m+1-y_1, \ m+2-y_2, ..., \ m+n-y_n)$, whose number of rows is exactly $n$, as $m+n-y_n\ge 1$. So indeed we see that this shape is the transpose of $(\tilde{x}_{m}, \ \tilde{x}_{m'}, ..., \ \tilde{x}_1 )$.

If instead $k+1\in Y$, the argument is similar (and in fact, easier) and we omit the details. \qed


\subsection{Proof of Theorem~\ref{thm sum}}
Collecting Lemmas~\ref{lem array to ssyt}, \ref{lem complement} and~\ref{lem transpose}, we see that $\frac{V_X}{G(m+1)}\frac{V_Y}{G(n+1)}$ is the number of pairs of SSYTs $(P,Q)$ satisfying the following.
\begin{itemize}
	\item The shape of $P$ is $(\tilde{x}_m, \tilde{x}_{m-1},...,\tilde{x}_1 )$.
	\item The shape of $Q$ is the shape of the transpose of $P$.
	\item The entries in $P$ belong to the range $\{1,2,...,m\}$.
	\item The entries in $Q$ belong to the range $\{1,2,...,n\}$.
\end{itemize}
Note that these SSYTs has size $\sum \tilde{x_i} = t^*$.
Thus, $$\sum_{\substack{\{x_1, ..., x_m\}\in U\\\sum x_i = t }} \frac{V_X}{G(m+1)}\frac{V_Y}{G(n+1)}$$ is the number of pairs of SSYTs $(P,Q)$ satisfying the following.
\begin{itemize}
	\item The size of $P$ is $t^*$.
	\item The shape of $Q$ is the shape of the transpose of $P$.
	\item The entries in $P$ belong to the range $\{1,2,...,m\}$.
	\item The entries in $Q$ belong to the range $\{1,2,...,n\}$.
\end{itemize}
But according to the Robinson-Schensted-Knuth correspondence (see, for example, Section 4.3 in~\cite{Kra} or Appendix A in~\cite{Ful}), the number of such pairs is exactly the number of $(0,1)$ matrices of size $n\times m$ that contains $t^*$ many $1$'s. Hence the result. \qed

\section{Proof of Theorem~\ref{thm dual}}

Recall that by definition the infinite dihedral group $\mathbb{D}_{\infty}$ is generated by two elements $g$ and $x$ satisfying
      $$x^2=1,\;\;xgx=g^{-1}.$$
To determine the finite dual of $\k\mathbb{D}_{\infty}$, we define a Hopf algebra using generators and relations at first.
\subsection{The Hopf algebra $\k\mathbb{D}_{\infty^{\circ}}$.}\label{sub3.1} As an algebra, $\k \mathbb{D}_{\infty^{\circ}}$ is generated by $F, \phi_{\lambda}, \psi_{\lambda}$ for $\lambda\in \k^{\ast}=\k\setminus \{0\}$ and subjects to the following relations
\begin{align*} &F\phi_{\lambda}=\phi_{\lambda}F,\;\;F\psi_{\lambda}=\psi_{\lambda}F,\;\;\phi_1=1,\\
&\phi_{\lambda}\psi_{\lambda'}=\psi_{\lambda'}\phi_{\lambda}=\psi_{\lambda\lambda'},\;\;
\phi_{\lambda}\phi_{\lambda'}=\phi_{\lambda\lambda'},\;\;
\psi_{\lambda}\psi_{\lambda'}=\phi_{\lambda\lambda'}
\end{align*}
for all $\lambda,\lambda'\in \k^{\ast}.$
The comultiplication, counit and the antipode are given by
\begin{align*} &\D(F)=F\otimes 1+\psi_1\otimes F,\;\;\D(\phi_{\lambda})=
\frac{1}{2}(\phi_{\lambda}+\psi_{\lambda})\otimes \phi_{\lambda}+\frac{1}{2}(\phi_{\lambda}-\psi_{\lambda})\otimes \phi_{\lambda^{-1}},\;\;\\
&\D(\psi_{\lambda})=
\frac{1}{2}(\phi_{\lambda}+\psi_{\lambda})\otimes \psi_{\lambda}-\frac{1}{2}(\phi_{\lambda}-\psi_{\lambda})\otimes \psi_{\lambda^{-1}},\\
&\e(F)=0,\;\;\e(\phi_{\lambda})=\e(\psi_{\lambda})=1,\\
&S(F)=-\psi_{1}F,\;\;S(\phi_{\lambda})=\frac{1}{2}(\phi_{\lambda^{-1}}+\psi_{\lambda^{-1}})
+\frac{1}{2}(\phi_{\lambda}-\psi_{\lambda}),\;\;\\
&S(\psi_{\lambda})=\frac{1}{2}(\phi_{\lambda^{-1}}+\psi_{\lambda^{-1}})
-\frac{1}{2}(\phi_{\lambda}-\psi_{\lambda})
\end{align*}
for $\lambda\in \k^{\ast}.$

\begin{lem} With operations defined above, $\k\mathbb{D}_{\infty^{\circ}}$ is a Hopf algebra.\end{lem}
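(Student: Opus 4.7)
The plan is to verify each Hopf algebra axiom by direct computation on the generators, exploiting the fact that the defined algebra turns out to be commutative. The underlying motivation (and a useful sanity check) is that $\phi_\lambda, \psi_\lambda, F$ may be realized as elements of $(\k\mathbb{D}_\infty)^{*}$ via $\phi_\lambda(g^n)=\lambda^n$, $\phi_\lambda(xg^n)=\lambda^{-n}$, $\psi_\lambda(g^n)=\lambda^n$, $\psi_\lambda(xg^n)=-\lambda^{-n}$, $F(g^n)=n$, $F(xg^n)=-n$, and under this identification the listed coproduct, counit, and antipode formulas are forced by the multiplication $g^{m}\cdot xg^{n}=xg^{n-m}$ in $\mathbb{D}_\infty$. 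Since the lemma only claims that the algebraic data of Subsection~\ref{sub3.1} form a Hopf algebra, however, I proceed purely algebraically.

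First I would verify that the algebra is well-defined. The relations $\phi_\lambda \phi_{\lambda'} = \phi_{\lambda\lambda'}$, $\phi_\lambda \psi_{\lambda'} = \psi_{\lambda'}\phi_\lambda = \psi_{\lambda\lambda'}$, and $\psi_\lambda \psi_{\lambda'} = \phi_{\lambda\lambda'}$ realize the group algebra of $\k^{*}\times\Z/2\Z$ via $\phi_\lambda \leftrightarrow (\lambda,0)$ and $\psi_\lambda \leftrightarrow (\lambda,1)$; in particular $\psi_1^2 = \phi_1 = 1$ and $\psi_\lambda = \phi_\lambda \psi_1$. Adjoining the central element $F$ (via $F\phi_\lambda = \phi_\lambda F$ and $F\psi_\lambda = \psi_\lambda F$) then yields the commutative polynomial algebra $\k[\k^{*}\times\Z/2\Z][F]$, which is visibly nontrivial with the expected linear independence of generators.

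Next I would show that $\D$ and $\e$ extend to algebra homomorphisms, which reduces to checking compatibility with each defining relation. The counit case is immediate. For $\D$, the key computational fact is the pair of identities
\begin{equation*}
(\phi_\lambda \pm \psi_\lambda)(\phi_{\lambda'} \pm \psi_{\lambda'}) = 2(\phi_{\lambda\lambda'} \pm \psi_{\lambda\lambda'}), \qquad (\phi_\lambda + \psi_\lambda)(\phi_{\lambda'} - \psi_{\lambda'}) = 0,
\end{equation*}
which follow at once from the algebra relations. They cause the cross terms in the expansions of $\D(\phi_\lambda)\D(\phi_{\lambda'})$, $\D(\phi_\lambda)\D(\psi_{\lambda'})$, and $\D(\psi_\lambda)\D(\psi_{\lambda'})$ to collapse cleanly to $\D(\phi_{\lambda\lambda'})$ and $\D(\psi_{\lambda\lambda'})$. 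Relations involving $F$ are handled separately, using that $\D(\psi_1) = \psi_1 \otimes \psi_1$ (set $\lambda=1$ in the formula for $\D(\psi_\lambda)$ and use $\psi_1^{-1}=\psi_1$) together with the centrality of $F$.

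Finally I would verify coassociativity, the counit axiom, and the antipode axiom on each type of generator; having shown $\D$ and $\e$ are algebra maps, this is enough. Abbreviating $a_\mu := \phi_\mu + \psi_\mu$ and $b_\mu := \phi_\mu - \psi_\mu$ tames the bookkeeping: one computes $\D(a_\lambda) = \tfrac{1}{2}(a_\lambda \otimes a_\lambda + b_\lambda \otimes b_{\lambda^{-1}})$ and $\D(b_\lambda) = \tfrac{1}{2}(a_\lambda \otimes b_\lambda + b_\lambda \otimes a_{\lambda^{-1}})$, and then both $(\D \otimes \id)\D(\phi_\lambda)$ and $(\id \otimes \D)\D(\phi_\lambda)$ expand to the same sum of four triple tensors; the case of $\psi_\lambda$ is entirely analogous, and the case of $F$ reduces to the direct check $(\D \otimes \id)\D(F) = F \otimes 1 \otimes 1 + \psi_1 \otimes F \otimes 1 + \psi_1 \otimes \psi_1 \otimes F = (\id \otimes \D)\D(F)$. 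The counit axiom reduces to $\phi_\lambda = \tfrac{1}{2}(a_\lambda + b_\lambda)$. For the antipode one first observes $S(a_\lambda) = a_{\lambda^{-1}}$ and $S(b_\lambda) = b_\lambda$, whence $m(S \otimes \id)\D(\phi_\lambda) = \tfrac{1}{2}a_{\lambda^{-1}}\phi_\lambda + \tfrac{1}{2}b_\lambda \phi_{\lambda^{-1}} = \tfrac{1}{2}(1+\psi_1) + \tfrac{1}{2}(1-\psi_1) = 1 = \e(\phi_\lambda)$; the case of $\psi_\lambda$ is parallel, and the $F$-case reduces to $-\psi_1 F + \psi_1 F = 0 = \e(F)$ using $\psi_1^2 = 1$. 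The main obstacle is purely bookkeeping---tracking signs and distinguishing $\lambda$ from $\lambda^{-1}$ in the various expansions---but the collapse identities above make every calculation come out cleanly, so no genuine technical difficulty arises.
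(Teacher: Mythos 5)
Your proof is correct and follows essentially the same route as the paper: the paper likewise passes to the combinations $e_\lambda=\tfrac{1}{2}(\phi_\lambda+\psi_\lambda)$, $f_\lambda=\tfrac{1}{2}(\phi_\lambda-\psi_\lambda)$ (your $\tfrac{1}{2}a_\lambda$, $\tfrac{1}{2}b_\lambda$), computes $\D(e_\lambda)=e_\lambda\otimes e_\lambda+f_\lambda\otimes f_{\lambda^{-1}}$ and $\D(f_\lambda)=e_\lambda\otimes f_\lambda+f_\lambda\otimes e_{\lambda^{-1}}$, and checks coassociativity and the antipode axiom on these elements together with the $(1,\psi_1)$-skew primitive $F$. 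Your additional verifications (identifying the algebra as $\k[\k^{*}\times\Z/2\Z][F]$ and checking that $\D$, $\e$ respect the relations) are exactly the steps the paper dismisses as routine, so there is no substantive difference.
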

\begin{proof} The proof is routine and we omit most of it. The only point we want to show is the coassociativity and the axiom for antipode of these generators. By definition, it is not hard to see that $\phi_{\pm1}$ and $\psi_{\pm1}$ are group-like elements. This implies that $F$ is a $(1,\psi_1)$-skew primitive element. So to show the coassociativity and the axiom for antipode of these generators, it is enough to show that $(\id\otimes \D)\D(\phi_{\lambda})=(\D\otimes \id)\D(\phi_{\lambda}),\;(\id\otimes \D)\D(\psi_{\lambda})=(\D\otimes \id)\D(\psi_{\lambda})$ and $S(\phi_{\lambda}')\phi_{\lambda}''=\phi_{\lambda}'S(\phi_{\lambda}'')
=S(\psi_{\lambda}')\psi_{\lambda}''=\psi_{\lambda}'S(\psi_{\lambda}'')=1$ for $\lambda\in \k^{\ast}$.  For this, let
$$e_{\lambda}:= \frac{1}{2}(\phi_{\lambda}+\psi_{\lambda}),\;\;\;\;f_{\lambda}:= \frac{1}{2}(\phi_{\lambda}-\psi_{\lambda}).$$
Clearly, to show the coassocitivity of $\phi_{\lambda}$ and $\psi_{\lambda}$, it is enough to show that for $e_{\lambda}$ and $f_{\lambda}$ for $\lambda\in \k^{\ast}.$    By the definition of the comultiplication, we have
\begin{align*} \D(e_{\lambda})&=\D(\frac{1}{2}(\phi_{\lambda}+\psi_{\lambda}))\\
&=\frac{1}{2}(\phi_{\lambda}+\psi_{\lambda})\otimes \frac{1}{2}(\phi_{\lambda}+\psi_{\lambda})+\frac{1}{2}(\phi_{\lambda}-\psi_{\lambda})\otimes
\frac{1}{2}(\phi_{\lambda^{-1}}-\psi_{\lambda^{-1}})\\
&=e_{\lambda}\otimes e_{\lambda}+f_{\lambda}\otimes f_{\lambda^{-1}}.
\end{align*}
Similarly, one can show that $\D(f_{\lambda})=e_{\lambda}\otimes f_{\lambda}+f_{\lambda}\otimes e_{\lambda^{-1}}.$ Now, a simple computation shows that
\begin{align*} (\id\otimes\D)\D(e_{\lambda})&=(\id\otimes \D)(e_{\lambda}\otimes e_{\lambda}+f_{\lambda}\otimes f_{\lambda^{-1}})\\
&=e_{\lambda}\otimes (e_{\lambda}\otimes e_{\lambda}+f_{\lambda}\otimes f_{\lambda^{-1}})+f_{\lambda}\otimes(e_{\lambda^{-1}}\otimes f_{\lambda^{-1}}+f_{\lambda^{-1}}\otimes e_{\lambda});\\
(\D\otimes\id)\D(e_{\lambda})&=(\D\otimes \id)(e_{\lambda}\otimes e_{\lambda}+f_{\lambda}\otimes f_{\lambda^{-1}})\\
&=(e_{\lambda}\otimes e_{\lambda}+f_{\lambda}\otimes f_{\lambda^{-1}})\otimes e_{\lambda}+(e_{\lambda}\otimes f_{\lambda}+f_{\lambda}\otimes e_{\lambda^{-1}})\otimes f_{\lambda^{-1}}.
\end{align*}
It is not hard to see that they indeed the same and thus $(\id\otimes\D)\D(e_{\lambda})=(\D\otimes\id)\D(e_{\lambda}).$ Similarly, one can show that both $(\id\otimes\D)\D(f_{\lambda})$ and $(\D\otimes\id)\D(f_{\lambda})$ equal to
$$e_{\lambda}\otimes e_{\lambda}\otimes f_{\lambda}+
e_{\lambda}\otimes f_{\lambda}\otimes e_{\lambda^{-1}}+
f_{\lambda}\otimes e_{\lambda^{-1}}\otimes e_{\lambda^{-1}}+
f_{\lambda}\otimes f_{\lambda^{-1}}\otimes f_{\lambda}.$$

To show the axiom of the antipode, we also just need test it one new generators $e_{\lambda}$ and $f_{\lambda}.$ By definition, we find that
\begin{align*}&e_{\lambda}e_{\lambda'}=e_{\lambda\lambda'},\;\;
f_{\lambda}f_{\lambda'}=f_{\lambda\lambda'},\;\;
e_{\lambda}f_{\lambda'}=f_{\lambda'}e_{\lambda}=0,\;\;\\
&\varepsilon(e_{\lambda})=1,\;\;\varepsilon(f_{\lambda})=0,\\
&S(e_{\lambda})=e_{\lambda^{-1}},\;\;S(f_{\lambda})=f_{\lambda},\end{align*}
for $\lambda,\lambda'\in \k^{\ast}.$
Therefore,
\begin{align*}e_{\lambda}'S(e_{\lambda}'')&=e_{\lambda}e_{\lambda^{-1}}+
f_{\lambda}f_{\lambda^{-1}}=e_{1}+f_{1}=\phi_1=1=\varepsilon(e_{\lambda}),\\
S(e_{\lambda}')e_{\lambda}''&=e_{\lambda^{-1}}e_{\lambda}+
f_{\lambda}f_{\lambda^{-1}}=e_{1}+f_{1}=\phi_1=1=\varepsilon(e_{\lambda}),\\
f_{\lambda}'S(f_{\lambda}'')&=e_{\lambda}f_{\lambda}+
f_{\lambda}e_{\lambda}=0=\varepsilon(f_{\lambda}),\\
S(f_{\lambda}')f_{\lambda}''&=e_{\lambda^{-1}}f_{\lambda}+
f_{\lambda}e_{\lambda^{-1}}=0=\varepsilon(f_{\lambda})
\end{align*}
for $\lambda\in \k^{\ast}.$
\end{proof}

Our strategy to prove Theorem \ref{thm dual} is as follows. Firstly, we construct Hopf subalgebra $A$ of $(\k \mathbb{D}_{\infty})^{\circ}$; Secondly, we show that $A$ is indeed the whole $(\k \mathbb{D}_{\infty})^{\circ}$; At last, we prove that there is a natural isomorphism between $\k \mathbb{D}_{\infty^{\circ}}$ and $A$.

\subsection{Construction of $A$}

 Clearly, $\{g^{i}x^{j}|i\in \Z, j=0,1\}$ is a basis of $\k \mathbb{D}_{\infty}.$ Denote its dual basis by $f_{i,j}$, that is, $f_{i,j}(g^{i'}x^{j'})=\delta_{i,i'}\delta_{j,j'}$ for $i,i'\in \Z,\;j,j'=0,1$ and $\delta_{\cdot,\cdot}$ the Kronecher's function. We construct the following elements in $(\k \mathbb{D}_{\infty})^{\ast}$:
\begin{equation}\label{elements} E:= \sum_{i\in \Z}i(f_{i,0}+f_{i,1}),\;\;\Phi_{\lambda}:= \sum_{i\in \Z}\lambda^i(f_{i,0}+f_{i,1}),\;\;\Psi_{\lambda}:= \sum_{i\in \Z}\lambda^i(f_{i,0}-f_{i,1})
	\end{equation}
	for $\lambda\in \k^{\ast}.$  It is straightforward to check that
	$$E(((g-1)^2))=\Phi_{\lambda}(((g-\lambda)(g-\lambda^{-1})))=
\Psi_{\lambda}(((g-\lambda)(g-\lambda^{-1})))=0,$$
	and therefore, all the  elements in~\eqref{elements} live in the finite dual $(\k \mathbb{D}_{\infty})^{\circ}.$ Here $((g-\lambda)(g-\lambda^{-1}))$	means the ideal generated by $(g-\lambda)(g-\lambda^{-1})$.	By definition,
	\begin{equation}\label{e6.2}\Phi_1|_{\mathbb{D}_\infty}=1\end{equation} and thus, it is the multiplicative identity of $(\k \mathbb{D}_{\infty})^{\circ}$.

Now we define $A$ to be subalgebra of $(\k \mathbb{D}_{\infty})^{\circ}$ which is generated by $E, \Phi_{\lambda}, \Psi_{\lambda}$ for $\lambda\in \k^{\ast}.$

\begin{lem} For the algebra $A$, we have the following equations
	\begin{align}\label{e.6.3}&E\Phi_{\lambda}=\Phi_{\lambda}E,\;\;E\Psi_{\lambda}=\Psi_{\lambda}E,\;\;
\\	&\label{e6.4}\Phi_{\lambda}\Phi_{\lambda'}=\Phi_{\lambda\lambda'},\;\;\Psi_{\lambda}\Psi_{\lambda'}=\Phi_{\lambda\lambda'},\;\;
\Phi_{\lambda}\Psi_{\lambda'}=\Psi_{\lambda\lambda'}
	\end{align}
for $\lambda, \lambda'\in \k^{\ast}.$
\end{lem}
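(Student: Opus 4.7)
Since $\k\mathbb{D}_\infty$ is the group algebra of a group, every basis element $g^i x^j$ is group-like, meaning $\Delta(g^i x^j)=g^i x^j\otimes g^i x^j$. Consequently, the convolution product on $(\k\mathbb{D}_\infty)^\ast$ (which is the multiplication inherited by $(\k\mathbb{D}_\infty)^\circ$) restricts to pointwise multiplication on the group basis: for any $f,f'\in (\k\mathbb{D}_\infty)^\ast$ and any $g^ix^j\in\mathbb{D}_\infty$,
\[
(ff')(g^ix^j)\;=\;f(g^ix^j)\,f'(g^ix^j).
\]
This is the one structural observation needed; the rest is a direct check against the dual basis $\{f_{i,j}\}$.

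Next I would record the values of the generators on the basis, which are read off directly from \eqref{elements}:
\[
E(g^ix^j)=i,\qquad \Phi_\lambda(g^ix^j)=\lambda^i,\qquad \Psi_\lambda(g^ix^j)=(-1)^j\lambda^i,
\]
for all $i\in\Z$ and $j\in\{0,1\}$. Using the pointwise-multiplication principle, each of the six identities \eqref{e.6.3}--\eqref{e6.4} is then verified by evaluating both sides at an arbitrary $g^ix^j$ and comparing scalars. For instance, $(\Psi_\lambda\Psi_{\lambda'})(g^ix^j)=(-1)^j\lambda^i\cdot(-1)^j(\lambda')^i=(\lambda\lambda')^i=\Phi_{\lambda\lambda'}(g^ix^j)$, giving $\Psi_\lambda\Psi_{\lambda'}=\Phi_{\lambda\lambda'}$; the remaining five identities are completely analogous (the $E$-relations reduce to the commutativity of scalar multiplication $i\cdot\lambda^i=\lambda^i\cdot i$). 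Since two elements of $(\k\mathbb{D}_\infty)^\ast$ that agree on the basis $\{g^ix^j\}$ are equal, this concludes the verification.

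The only step that might warrant a sentence of justification is the first one — explaining why convolution collapses to pointwise multiplication on group-like elements — because once that is in place, nothing beyond bookkeeping remains. There is no genuine obstacle: the whole lemma is a sanity check that the formulas in \eqref{elements} really do satisfy the algebra relations used in the definition of $\k\mathbb{D}_{\infty^{\circ}}$ in Subsection~\ref{sub3.1}, which is precisely what is needed later to set up the comparison map $\k\mathbb{D}_{\infty^{\circ}}\to A$.
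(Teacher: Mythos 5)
Your proof is correct and is essentially the paper's own argument in a slightly different dress: the observation that convolution is pointwise multiplication on the group basis is exactly equivalent to the orthogonality relations $f_{i,0}f_{j,0}=\delta_{i,j}f_{i,0}$, $f_{i,1}f_{j,1}=\delta_{i,j}f_{i,1}$, $f_{i,0}f_{j,1}=0$ that the paper computes with, and the remaining verifications coincide. (The paper gets the $E$-relations from the general fact that the dual of a cocommutative Hopf algebra is commutative, but your pointwise argument yields the same conclusion just as quickly.)
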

\begin{proof} Since $\k \mathbb{D}_{\infty}$ is cocommutative, $(\k \mathbb{D}_{\infty})^{\circ}$ is a commutative algebra. Therefore, we have the equation \eqref{e.6.3}. Since $$f_{i,0}f_{j,0}=\delta_{i,j}f_{i,0},\;\;
	f_{i,1}f_{j,1}=\delta_{i,j}f_{i,1},\;\;f_{i,0}f_{j,1}=0$$
	for $i,j\in \Z$, we have
	\begin{align}&\Phi_{\lambda}\Phi_{\lambda'}=\sum_{i\in \Z}\lambda^i(f_{i,0}+f_{i,1})
	\sum_{j\in \Z}\lambda'^j(f_{j,0}+f_{j,1})=\sum_{i \in \Z}(\lambda\lambda')^i(f_{i,0}+f_{i,1})=\Phi_{\lambda\lambda'},\notag\\
	&\Psi_{\lambda}\Psi_{\lambda'}=\sum_{i\in \Z}\lambda^i(f_{i,0}-f_{i,1})
	\sum_{j\in \Z}\lambda'^j(f_{j,0}-f_{j,1})=\sum_{i \in \Z}(\lambda\lambda')^i(f_{i,0}+f_{i,1})=\Phi_{\lambda\lambda'},\notag\\
	&\Phi_{\lambda}\Psi_{\lambda'}=\Psi_{\lambda'}\Phi_{\lambda}=\sum_{i\in \Z}\lambda^i(f_{i,0}+f_{i,1})
	\sum_{j\in \Z}\lambda'^j(f_{j,0}-f_{j,1})=\sum_{i \in \Z}(\lambda\lambda')^i(f_{i,0}-f_{i,1})=\Psi_{\lambda\lambda'}\notag
	\end{align}
	for $\lambda,\lambda'\in \k^{\ast}.$
\end{proof}
	
\begin{lem}\label{l11} The subalgebra $A$ is a Hopf subalgebra and the actions of comultiplication, counit and the antipode are give by
\begin{align}&\D(E)=E\otimes 1+\Psi_1\otimes E,\;\;\D(\Phi_{\lambda})=
\frac{1}{2}(\Phi_{\lambda}+\Psi_{\lambda})\otimes \Phi_{\lambda}+\frac{1}{2}(\Phi_{\lambda}-\Psi_{\lambda})\otimes \Phi_{\lambda^{-1}},\\
&\D(\Psi_{\lambda})=
\frac{1}{2}(\Phi_{\lambda}+\Psi_{\lambda})\otimes \Psi_{\lambda}-\frac{1}{2}(\Phi_{\lambda}-\Psi_{\lambda})\otimes \Psi_{\lambda^{-1}},\\
&\varepsilon(\Phi_{\lambda})=\varepsilon(\Psi_{\lambda})=1,\;\;\varepsilon(E)=0,\\
& S(\Phi_{\lambda})=\frac{1}{2}(\Phi_{\lambda^{-1}}+\Psi_{\lambda^{-1}})
+\frac{1}{2}(\Phi_{\lambda}-\Psi_{\lambda}),\;\;\\
&S(\Psi_{\lambda})=\frac{1}{2}(\Phi_{\lambda^{-1}}+\Psi_{\lambda^{-1}})
-\frac{1}{2}(\Phi_{\lambda}-\Psi_{\lambda}),\;\;
S(E)=-\Psi_{1}E.
\end{align}
\end{lem}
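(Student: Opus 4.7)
The plan is to compute the restrictions of the comultiplication, counit and antipode of $(\k\mathbb{D}_{\infty})^{\circ}$ on each generator $E$, $\Phi_{\lambda}$, $\Psi_{\lambda}$ and observe that the outputs already lie in $A\otimes A$, $\k$, $A$ respectively; this simultaneously shows that $A$ is closed under the Hopf operations (hence a Hopf subalgebra, since $1=\Phi_1\in A$ by \eqref{e6.2}) and produces the claimed formulas. I will use the standard identities $\Delta(f)(h\otimes h')=f(hh')$, $\varepsilon(f)=f(1)$ and $S(f)(h)=f(S_{\mathbb{D}_{\infty}}(h))$; the group-theoretic antipode satisfies $S_{\mathbb{D}_{\infty}}(g^i)=g^{-i}$ and $S_{\mathbb{D}_{\infty}}(g^ix)=g^ix$, since $x^2=1$ and $xg=g^{-1}x$ imply $(g^ix)^{-1}=xg^{-i}=g^ix$.

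The preliminary step is to record the multiplication $g^ix^j\cdot g^{i'}x^{j'}=g^{i+(-1)^ji'}x^{j+j'}$ in $\mathbb{D}_{\infty}$ (with $j+j'$ reduced modulo $2$), together with the evaluations $E(g^ix^j)=i$, $\Phi_{\lambda}(g^ix^j)=\lambda^i$, $\Psi_{\lambda}(g^ix^j)=(-1)^j\lambda^i$. The crucial observation that unlocks the whole computation is that $\tfrac{1}{2}(\Phi_{\lambda}+\Psi_{\lambda})=\sum_i\lambda^if_{i,0}$ and $\tfrac{1}{2}(\Phi_{\lambda}-\Psi_{\lambda})=\sum_i\lambda^if_{i,1}$ act as coset-indicator functionals for $g^{\Z}$ and $g^{\Z}x$: each one vanishes on the opposite coset and returns $\lambda^i$ on its own.

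With these observations each formula becomes a direct verification. For $\Delta(E)$ both sides evaluate to $i+(-1)^ji'$ at $g^ix^j\otimes g^{i'}x^{j'}$. For $\Delta(\Phi_{\lambda})$ one has $\Phi_{\lambda}(hh')=\lambda^{i+(-1)^ji'}$, which equals $\lambda^i\lambda^{i'}$ when $j=0$ and $\lambda^i\lambda^{-i'}$ when $j=1$; the two coset projectors in the first tensor slot pick out exactly these cases, giving the stated expression. The verification for $\Delta(\Psi_{\lambda})$ is parallel, with an extra $(-1)^{j+j'}$ factor forcing $\Psi_{\lambda}$, $\Psi_{\lambda^{-1}}$ in the second tensor slot and producing the sign change on the second term. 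The counit values follow by plugging $h=1$ into each generator. For the antipode, one computes $f\circ S_{\mathbb{D}_{\infty}}$ for $f\in\{E,\Phi_{\lambda},\Psi_{\lambda}\}$ and re-expresses the result via the same coset-projector identities; in particular $S(E)$ takes values $-i$ on $g^i$ and $i$ on $g^ix$, matching $-\Psi_1E$ after a brief product computation that uses orthogonality of the $f_{i,j}$. There is no deep obstacle; the one bit requiring care is recognizing the pointwise answers for $\Delta(\Phi_{\lambda})$ and $\Delta(\Psi_{\lambda})$ as specific elements of $A\otimes A$, which is precisely what the coset-projector identities enable.
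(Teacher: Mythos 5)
Your proposal is correct and takes essentially the same approach as the paper: a direct verification of the stated formulas from the dual Hopf structure of $(\k\mathbb{D}_{\infty})^{\circ}$, hinging on the same identification $\tfrac{1}{2}(\Phi_{\lambda}\pm\Psi_{\lambda})=\sum_{i}\lambda^{i}f_{i,0}$ (resp.\ $\sum_{i}\lambda^{i}f_{i,1}$) that the paper uses to recognize the output as an element of $A\otimes A$. The only cosmetic differences are that you evaluate $\Delta(f)$ pointwise on pairs $g^{i}x^{j}\otimes g^{i'}x^{j'}$ while the paper expands $\Delta(f_{i,j})$ in the dual basis and resums, and that you compute $\varepsilon$ and $S$ directly from $\varepsilon(f)=f(1)$ and $S(f)=f\circ S_{\mathbb{D}_{\infty}}$ where the paper derives them from the Hopf axioms; both are routine and equivalent.
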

\begin{proof} It is enough to determine the expression of the comultiplication. All of them based on the following simple computation: $$\D(f_{i,0})=\sum_{j\in \Z}(f_{j,0}\otimes f_{i-j,0}+
f_{j,1}\otimes f_{j-i,1}),\;\;\D(f_{i,1})=\sum_{j\in \Z}(f_{j,0}\otimes f_{i-j,1}+
f_{j,1}\otimes f_{j-i,0}).$$ Actually from this we have
\begin{eqnarray*}\D(E)&=&\D(\sum_{i\in \Z} i(f_{i,0}+f_{i,1}))=\sum_{i\in \Z} i(\D(f_{i,0})+\D(f_{i,1}))\\
&=&\sum_{i\in \Z}i(\sum_{j\in \Z}(f_{j,0}\otimes f_{i-j,0}+
f_{j,1}\otimes f_{j-i,1})+\sum_{j\in \Z}(f_{j,0}\otimes f_{i-j,1}+
f_{j,1}\otimes f_{j-i,0}))\\
&=&\sum_{i,j\in\Z}(jf_{j,0}\otimes f_{i-j,0}+f_{j,0}\otimes (i-j)f_{i-j,0})+
\sum_{i,j\in\Z}(jf_{j,1}\otimes f_{j-i,1}+f_{j,1}\otimes (i-j)f_{j-i,1})\\
&&+\sum_{i,j\in\Z}(jf_{j,0}\otimes f_{i-j,1}+f_{j,0}\otimes (i-j)f_{i-j,1})+
\sum_{i,j\in\Z}(jf_{j,1}\otimes f_{j-i,0}+f_{j,1}\otimes (i-j)f_{j-i,0})\\
&=&\sum_{i,j\in\Z}(jf_{j,0}\otimes f_{i-j,0}+jf_{j,0}\otimes f_{i-j,1})+\sum_{i,j\in\Z}
(f_{j,0}\otimes (i-j)f_{i-j,0}+f_{j,0}\otimes (i-j)f_{i-j,1})\\
&&+\sum_{i,j\in\Z}(jf_{j,1}\otimes f_{j-i,1}+jf_{j,1}\otimes f_{j-i,0})+
\sum_{i,j\in\Z}(f_{j,1}\otimes (i-j)f_{j-i,1}+f_{j,1}\otimes (i-j)f_{j-i,0})\\
&=&\sum_{j\in \Z}jf_{j,0}\otimes \Phi_1+\sum_{j\in\Z}f_{j,0}\otimes E+
\sum_{j\in \Z}jf_{j,1}\otimes \Phi_1-\sum_{j\in\Z}f_{j,1}\otimes E\\
&=&E\otimes 1+\Psi_1\otimes E,
\end{eqnarray*}
and
\begin{eqnarray*}\D(\Phi_{\lambda})&=&\D(\sum_{i\in \Z} \lambda^{i}(f_{i,0}+f_{i,1}))=\sum_{i\in \Z} \lambda^{i}(\D(f_{i,0})+\D(f_{i,1}))\\
&=&\sum_{i\in \Z}\lambda^{i}(\sum_{j\in \Z}(f_{j,0}\otimes f_{i-j,0}+
f_{j,1}\otimes f_{j-i,1})+\sum_{j\in \Z}(f_{j,0}\otimes f_{i-j,1}+
f_{j,1}\otimes f_{j-i,0}))\\
&=&\sum_{i,j\in\Z}(\lambda^{j}f_{j,0}\otimes \lambda^{i-j}f_{i-j,0}+\lambda^{j}f_{j,1}\otimes \lambda^{i-j}f_{j-i,1})\\
&&+\sum_{i,j\in\Z}(\lambda^{j}f_{j,0}\otimes \lambda^{i-j}f_{i-j,1}+\lambda^{j}f_{j,1}\otimes \lambda^{i-j}f_{j-i,0})\\
&=&\sum_{i,j\in\Z}(\lambda^{j}f_{j,0}\otimes \lambda^{i-j}f_{i-j,0}+\lambda^{j}f_{j,0}\otimes \lambda^{i-j}f_{i-j,1})\\
&&+\sum_{i,j\in\Z}(\lambda^{j}f_{j,1}\otimes \lambda^{i-j}f_{j-1,1}+\lambda^{j}f_{j,1}\otimes \lambda^{i-j}f_{j-i,0})\\
&=&\sum_{j\in \Z}\lambda^{j}f_{j,0}\otimes \Phi_{\lambda}+\sum_{j\in \Z}\lambda^{j}f_{j,1}\otimes \Phi_{\lambda^{-1}}\\
&=&\frac{1}{2}(\Phi_{\lambda}+\Psi_{\lambda})\otimes \Phi_{\lambda}+\frac{1}{2}(\Phi_{\lambda}-\Psi_{\lambda})\otimes \Phi_{\lambda^{-1}}.
\end{eqnarray*}
Similarly, one can show that $\D(\Psi_{\lambda})=
\frac{1}{2}(\Phi_{\lambda}+\Psi_{\lambda})\otimes \Psi_{\lambda}-\frac{1}{2}(\Phi_{\lambda}-\Psi_{\lambda})\otimes \Psi_{\lambda^{-1}}.$

By the axioms of the definition of a Hopf algebra, the expressions of counit and the antipode can be gotten easily.
 \end{proof}	
\subsection{$A=(\k\mathbb{D}_{\infty})^{\circ}$. } 	For the reader's convenience, we formulate this fact as a proposition. We remark that this is the place where we need Corollary \ref{cor}.
\begin{prop}\label{prop12} As an algebra, $(\k\mathbb{D}_{\infty})^{\circ}$ is generated by $E, \Phi_{\lambda}$ and $\Psi_{\lambda}$ for $\lambda\in \k^{\ast}$, that is,
$$A=(\k\mathbb{D}_{\infty})^{\circ}.$$
\end{prop}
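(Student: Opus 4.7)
The inclusion $A\subseteq(\k\mathbb{D}_\infty)^\circ$ is immediate from the construction, so the task is the reverse. My plan is to take $\xi\in(\k\mathbb{D}_\infty)^\circ$ vanishing on some two-sided ideal $I$ of finite codimension and exhibit $\xi$ as a linear combination of elements of $A$. First, I classify such ideals. Because $xgx=g^{-1}$, the intersection $I\cap\k[g,g^{-1}]$ is invariant under $g\mapsto g^{-1}$, so it is generated by a palindromic polynomial $p(g)=(g-1)^{a}(g+1)^{b}\prod_{i}\bigl((g-\lambda_{i})(g-\lambda_{i}^{-1})\bigr)^{m_{i}}$ with the $\lambda_{i}\ne\pm 1$, and in fact $I=p(g)\k\mathbb{D}_\infty$. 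The Chinese Remainder Theorem then splits $\k\mathbb{D}_\infty/I$ as a direct sum of local quotients, reducing the problem to the case of a single local ideal $I_\lambda^m:=\bigl((g-\lambda)(g-\lambda^{-1})\bigr)^{m}$, or $(g\mp 1)^{m}$ when the root is $\pm 1$.

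Focus on the main case $\lambda\ne\pm 1$. The quotient $\k\mathbb{D}_\infty/I_\lambda^m$ has basis $\{g^{i},g^{i}x:0\le i\le 2m-1\}$ and dimension $4m$. I claim the $4m$ elements
$$E^{k}\Phi_{\lambda^{\epsilon}},\quad E^{k}\Psi_{\lambda^{\epsilon}}\qquad (0\le k\le m-1,\ \epsilon\in\{1,-1\})$$
of $A$ all vanish on $I_\lambda^m$ and form a basis of $(\k\mathbb{D}_\infty/I_\lambda^m)^{*}$. Vanishing is direct: for any $q(g)\in\k[g,g^{-1}]$ divisible by $(g-\lambda)^{m}$ one has $(E^{k}\Phi_{\lambda})(q(g)x^{j})=[(g\partial_{g})^{k}q(g)]|_{g=\lambda}$, and $(g\partial_{g})^{k}$ preserves divisibility by $(g-\lambda)^{m-k}$, so this vanishes for $k<m$; the other three families are symmetric. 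For linear independence I pass to the alternate generators $e_{\mu}=\tfrac{1}{2}(\Phi_{\mu}+\Psi_{\mu})$ and $f_{\mu}=\tfrac{1}{2}(\Phi_{\mu}-\Psi_{\mu})$ already used in Lemma \ref{l11}. Since $e_{\mu}(g^{i}x^{j})=\mu^{i}\delta_{j,0}$ and $f_{\mu}(g^{i}x^{j})=\mu^{i}\delta_{j,1}$, the evaluation matrix of $\{E^{k}e_{\lambda^{\epsilon}},E^{k}f_{\lambda^{\epsilon}}\}$ on the basis $\{g^{i},g^{i}x\}$ is block-diagonal with two $2m\times 2m$ blocks, each of which has entries $i^{k}\lambda^{\epsilon i}$ and (after transposition) coincides with the matrix in Corollary \ref{cor} specialized at $x=\lambda$. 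By Corollary \ref{cor} each block has determinant $G(m+1)^{2}(\lambda^{-1}-\lambda)^{m^{2}}\ne 0$, giving a basis of $(\k\mathbb{D}_\infty/I_\lambda^m)^{*}$ that lies entirely inside $A$.

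The cases $\lambda=\pm 1$ are handled in the same spirit with $I_\lambda^m$ replaced by $(g\mp 1)^{m}$; the relevant evaluation matrix then reduces to a classical Vandermonde (which is nonzero), so Corollary \ref{cor} is not needed here. Reassembling the local pieces via the CRT decomposition produces the desired expression for $\xi$, giving $\xi\in A$. The main obstacle is precisely the linear independence step for $\lambda\ne\pm 1$: vanishing of the natural candidate functionals is automatic, but there is no elementary reason why these $4m$ functionals should be linearly independent on the $4m$-dimensional local quotient. This is exactly the role of Corollary \ref{cor}, and ultimately of Theorem \ref{thm sum}.
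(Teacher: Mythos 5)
Your proposal is correct and follows essentially the same route as the paper: reduce to ideals generated by a palindromic polynomial $p(g)$ (palindromic because conjugation by $x$ sends $g$ to $g^{-1}$), split by the Chinese Remainder Theorem into local factors, exhibit the explicit functionals $E^{k}\Phi_{\lambda^{\pm1}},E^{k}\Psi_{\lambda^{\pm1}}$ as a basis of the dual of each local quotient, and invoke Corollary~\ref{cor} for the nonvanishing of the $2m\times 2m$ evaluation determinant when $\lambda\neq\pm1$. Your $(g\partial_{g})^{k}$ formulation of the vanishing step is a slightly cleaner packaging of the paper's Stirling-number computation, and your parenthetical ``in fact $I=p(g)\k\mathbb{D}_\infty$'' should be weakened to the containment $(p(g))\subseteq I$ (which is all that is needed), but neither point affects the argument.
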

\begin{proof} To show the result, it suffices to show that for an arbitrary cofinite ideal $I\subset \k D_{\infty}$, we have $(\k D_{\infty}/I)^\ast\subset A.$ 	We divide this into several steps.\\[1.5mm]
\emph{Claim 1:The ideal $I$ is cofinite if and only if $I\cap \k[g,g^{-1}]\neq \{0\}.$}\\[1mm]
\emph{Proof of Claim 1. } At first, let $I$ be cofinite. If $I\cap\k[g,g^{-1}]=0$, then $(\k[g,g^{-1}]+I)/I\cong \k[g,g^{-1}]/I\cap \k[g,g^{-1}]=\k[g,g^{-1}]$ which is infinite dimensional. This contradicts to the fact that $I$ is cofinite. Conversely, assume that $I\cap \k[g,g^{-1}]\neq 0$. Since $\k[g,g^{-1}]$ is a principle ideal ring, $I\cap \k[g,g^{-1}]=(p(g))$ for some nonzero polynomial $p(g)\in \k[g]$. Therefore, $(\k[g,g^{-1}]+I)/I\cong \k[g,g^{-1}]/I\cap \k[g,g^{-1}]$ is finite-dimensional which clearly implies that $\k\mathbb{D}_{\infty}/I$ is finite-dimensional too.\qed

Due to this claim, we have $I\cap \k[g,g^{-1}]\neq \{0\}$. As we already shown, there exists a monic polynomial $p(g)\in \k[g]$ such that $I\cap \k[g,g^{-1}]=p(g)\k[g,g^{-1}]$.\\[1.5mm]
\emph{Claim 2: The polynomial $p(g)=\prod_{i}(g-\lambda_i)^{r_i}(g-\lambda_i^{-1})^{r_i}(g-1)^r(g+1)^sg^{t}$ for some $\pm 1\neq\lambda_i\in \k^\ast$ and $r_i,r,s,t\in \mathbb{N}.$}\\[1mm]
\emph{Proof of Claim 2. } By definition, $I$ and thus $(p(g))$ is stable under the action of $G=\Z_2=\langle x\rangle.$ Now $$x\cdot p(g)=xp(g)x^{-1}=p(g^{-1}).$$
This implies that if $\lambda\neq 0$ is a root of $p(g)$, then so is $\lambda^{-1}.$\qed

Based on this claim, we consider a special case at first.\\[1.5mm]
\emph{Claim 3: If $p(g)=(g-\lambda)^{r}(g-\lambda^{-1})^{r}$ for some $\pm 1\neq \lambda\in \k^{\ast}$, then $(\k\mathbb{D}_{\infty}/I)^\ast\subset A.$}\\[1mm]
\emph{Proof of Claim 3. } To show the result, there is no harm to assume that $I$ is the just the ideal of $\k \mathbb{D}_{\infty}$ generated by $p(g).$ Now dim$\k \mathbb{D}_{\infty}/I=4r$ and $\{g^{i}x^{j}|0\leq i\leq 2r-1,0\leq j\leq 1\}$ is a basis. We claim that $$\{E^s\Phi_{\lambda},E^s\Phi_{\lambda^{-1}},E^s\Psi_{\lambda},E^s\Psi_{\lambda^{-1}}|0\leq s\leq r-1\}$$ is a basis of $(\k \mathbb{D}_{\infty}/I)^{\ast}.$ To prove this, we first show that they belong to $(\k \mathbb{D}_{\infty}/I)^{\ast}$ , that is, all of them vanish on $I.$ We use the case $E^s\Phi_{\lambda}$ to explain this fact since the other cases can be proved similarly. In fact, through a straightforward computation one can easily check that
	\begin{align}\label{e6.7}
	&(E^{s}\Phi_{\lambda}, (g-\lambda)^{t})=(E^s\otimes \Phi_{\lambda},\sum_{i=0}^{t}(-\lambda)^{t-i}\left(
	\begin{array}{c}
	t\\
	i
	\end{array}
	\right)
	g^{i}\otimes g^{i})\\
	&=\lambda^{t}\sum_{i=0}^{t}(-1)^{t-i}\left(
	\begin{array}{c}
	t\\
	i
	\end{array}
	\right)i^{s}\notag
	\end{align} which is zero for $s< t$, since it is a Stirling number of the second kind (see (13.13) in \cite{vW}.) Similarly,
	\begin{equation}\label{e6.8}
	(E^{s}\Phi_{\lambda}, (g^{-1}-\lambda^{-1})^t)=0\end{equation}
	for $s<t.$

Now we claim the following fact, that is, \emph{ if for some $f(g)\in \k[g,g^{-1}]$ we have $(E^{s}\Phi_{\lambda}, f(g))=0$ for all $s=0,1,...,t$, then  $(E^{t}\Phi_{\lambda}, f(g)g^k)=0$ for all $k\in\Z$}. To show this fact, there is no harm to assume that $f(g)=\sum_{i=-n}^{n}a_ig^i$ for some $n\in \mathbb{N}.$ The condition implies that
 $$\sum_{i=-n}^{n}a_ii^s\lambda^i\equiv 0$$
 for all $s=0,1,\ldots,t.$ Now direct computation shows that
 \begin{align*}(E^{t}\Phi_{\lambda}, f(g)g^{k})&=\sum_{i=-n}^{n}a_i(i+k)^t\lambda^{i+k}=
 \sum_{i=-n}^{n}\sum_{s=0}^{t}a_i\left(
                                   \begin{array}{c}
                                     t \\
                                     s \\
                                   \end{array}
                                 \right)
 i^sk^{t-s}\lambda^{i+k}\\
 &=\sum_{s=0}^{t} k^{t-s}\lambda^{k} \left(
                                   \begin{array}{c}
                                     t \\
                                     s \\
                                   \end{array}
                                 \right)\sum_{i=-n}^{n}a_ii^{s}\lambda^i\\
                                 &=0.\end{align*}
 This fact implies that $(E^{t}\Phi_{\lambda}, f(g)h(g))=0$ for all $h(g)\in\k[g,g^{-1}]$. Using the same argument, we also have $(E^{t}\Phi_{\lambda}, f(g)h(g)x)=0$ for all $h(g)\in\k[g,g^{-1}]$.
	As elements in the ideal $((g-\lambda)^r(g-\lambda^{-1})^r)$ are linear combinations of $g^k(g-\lambda)^r(g-\lambda^{-1})^rx^\ell$ and $g^i(g^{-1}-\lambda)^r(g^{-1}-\lambda^{-1})^rx^j$ for $k,\ell,i,j\in\Z$,	
	we get
	$$(E^{s}\Phi_{\lambda},((g-\lambda)^r(g-\lambda^{-1})^r))\equiv 0,\;\;\textrm{for}\;s<r$$
by combining equations \eqref{e6.7}, \eqref{e6.8} together with above discussion.

	Similarly, one can show this for $E^s\Phi_{\lambda^{-1}},E^s\Psi_{\lambda},E^s\Psi_{\lambda^{-1}}$ and thus we get that	$$\{E^s\Phi_{\lambda},E^s\Phi_{\lambda^{-1}},E^s\Psi_{\lambda},E^s\Psi_{\lambda^{-1}}|0\leq s\leq r-1\}\subset (\k \mathbb{D}_{\infty}/I)^{\ast}.$$
	By the definition of these elements, we have
	\begin{align}&E^s\Phi_{\lambda}=\sum_{i\in \Z}i^s(f_{i,0}+f_{i,1})\sum_{j\in\Z}
	\lambda^{j}(f_{j,0}+f_{j,1})=\sum_{i\in \Z}i^s\lambda^{i}(f_{i,0}+f_{i,1}),\notag\\
	&E^s\Phi_{\lambda^{-1}}=\sum_{i\in \Z}i^s(f_{i,0}+f_{i,1})\sum_{j\in\Z}
	\lambda^{-j}(f_{j,0}+f_{j,1})=\sum_{i\in \Z}i^s\lambda^{-i}(f_{i,0}+f_{i,1}),\notag\\
	&E^s\Psi_{\lambda}=\sum_{i\in \Z}i^s(f_{i,0}+f_{i,1})\sum_{j\in\Z}
	\lambda^{j}(f_{j,0}-f_{j,1})=\sum_{i\in \Z}i^s\lambda^{i}(f_{i,0}-f_{i,1}),\label{e6.9}\\
	&E^s\Psi_{\lambda^{-1}}=\sum_{i\in \Z}i^s(f_{i,0}+f_{i,1})\sum_{j\in\Z}
	\lambda^{-j}(f_{j,0}-f_{j,1})=\sum_{i\in \Z}i^s\lambda^{-i}(f_{i,0}-f_{i,1}).\notag
	\end{align}
	To show that $\{E^s\Phi_{\lambda},E^s\Phi_{\lambda^{-1}},E^s\Psi_{\lambda},E^s\Psi_{\lambda^{-1}}|0\leq s\leq r-1\}$ is a basis of $(\k \mathbb{D}_{\infty}/I)^{\ast}$, it is enough to show that they are linear independent by noting that dim$(\k \mathbb{D}_{\infty}/I)^{\ast}=4r.$ From the expression \eqref{e6.9}, one can construct the following elements through linear combinations of $\{E^s\Phi_{\lambda},E^s\Phi_{\lambda^{-1}},E^s\Psi_{\lambda},E^s\Psi_{\lambda^{-1}}|0\leq s\leq r-1\}$:
	\begin{align*}
	& p_{0}:=\sum_{i\in \Z}\lambda^{i}f_{i,0},\;\;p_1:=\sum_{i\in \Z}i\lambda^{i}f_{i,0},\;\;\cdots\;\;p_{r-1}:=\sum_{i\in \Z}i^{r-1}\lambda^{i}f_{i,0},\\
	&q_{0}:=\sum_{i\in \Z}\lambda^{-i}f_{i,0},\;\;q_1:=\sum_{i\in \Z}i\lambda^{-i}f_{i,0},\;\;\cdots\;\;q_{r-1}:=\sum_{i\in \Z}i^{r-1}\lambda^{-i}f_{i,0},\\
	& p'_{0}:=\sum_{i\in \Z}\lambda^{i}f_{i,1},\;\;p'_1:=\sum_{i\in \Z}i\lambda^{i}f_{i,1},\;\;\cdots\;\;p'_{r-1}:=\sum_{i\in \Z}i^{r-1}\lambda^{i}f_{i,1},\\
	&q'_{0}:=\sum_{i\in \Z}\lambda^{-i}f_{i,1},\;\;q'_1:=\sum_{i\in \Z}i\lambda^{-i}f_{i,1},\;\;\cdots\;\;q'_{r-1}:=\sum_{i\in \Z}i^{r-1}\lambda^{-i}f_{i,1}.
	\end{align*}
	So it is enough to show $\{p_i,q_i,p'_i,q'_i|0\leq i\leq r-1\}$ are linear independent in $(\k \mathbb{D}_{\infty}/I)^{\ast}$. Clearly, the elements with $'$ and those without $'$ have disjoint support on the basis. Thus we only need to show that $\{p_i,q_i|0\leq i\leq r-1\}$ (resp. $\{p'_i,q'_i|0\leq i\leq r-1\}$) are linear independent. We only show that $\{p_i,q_i|0\leq i\leq r-1\}$ are linear independent here since one can prove the other case in the same way. Now assume that	\begin{equation}\label{e6.10}\sum_{i=0}^{r-1}k_ip_i+\sum_{i=0}^{r-1}l_iq_i=0\end{equation}
	for some $k_i,l_i\in \k.$ Let equation \eqref{e6.10} act on $\{g^{j}|0\leq j\leq 2r-1\}$ respectively, then we get a system of homogeneous linear equations of $k_i,l_i$. The coefficient matrix of this system is
	$$B:=\left(
	\begin{array}{cccccccc}
	1 & 0 & \cdots& 0& 1 & 0 & \cdots & 0 \\
	\lambda & \lambda & \cdots & \lambda & \lambda^{-1} & \lambda^{-1} & \cdots & \lambda^{-1} \\
	\lambda^2& 2\lambda^2 & \cdots & 2^{r-1}\lambda^2 & \lambda^{-2} & 2\lambda^{-2}& \cdots &2^{r-1} \lambda^{-2}\\
	\lambda^3& 3\lambda^3 & \cdots & 3^{r-1}\lambda^3 & \lambda^{-3} & 3\lambda^{-3}& \cdots &3^{r-1} \lambda^{-3} \\
	\vdots& \vdots & \cdots & \vdots & \vdots & \vdots & \cdots & \vdots \\
	\lambda^{r'}& r'\lambda^{r'} & \cdots & r'^{r-1}\lambda^{r'} & \lambda^{-r'} & r'\lambda^{-r'}& \cdots &r'^{r-1} \lambda^{-r'}
	\end{array}
	\right),$$
	for $r'=2r-1.$
	We need to show that the determinant $|B|\neq 0.$ By Corollary~\ref{cor}, this is indeed the case. Therefore, $\{p_i,q_i|0\leq i\leq r-1\}$ are linear independent.\qed

Now we can finish the proof. At first, one can repeat above proof (which will be easier) to show Claim 3 in the following cases: 1) $p(g)=(g-1)^s$ for some $s\in \mathbb{N}$; 2) $p(g)=(g+1)^s$ for some $s\in \mathbb{N}$; 3) $p(g)=g^{s}$ for some $s\in \mathbb{N}$ (in this case, we can assume that $p(g)$ is just $1$ of course). Therefore, for the general $p(g)$, one can use the Chinese Remainder Theorem to show that $(\k\mathbb{D}_{\infty})^{\circ}$ is just the subalgebra $A$.
\end{proof}

\subsection{Proof of Theorem \ref{thm dual}.} We are in the position to give the proof of Theorem \ref{thm dual} now. For this, we define the following map
$$\Theta\colon \k \mathbb{D}_{\infty^{\circ}}\to (\k \mathbb{D}_{\infty})^{\circ},\;\;
F\mapsto E,\;\phi_{\lambda}\mapsto \Phi_{\lambda},\;\psi_{\lambda}\mapsto \Psi_{\lambda},\;\;\;(\lambda\in \k^{\ast}).$$	
According to equations \eqref{e6.2}-\eqref{e6.4}, this map $\Theta$ extends to an algebra morphism naturally which is still denoted by $\Theta$. In addition, it is a Hopf morphism by Lemma \ref{l11}.

This Hopf morphism is injective since $\{F^i\phi_{\lambda},F^i\psi_{\lambda}|i\in \mathbb{N},\lambda\in \k^{\ast}\}$ is a basis of $\k \mathbb{D}_{\infty^{\circ}}$ (by Diamond Lemma \cite{Ber}) and $\{E^i\Phi_{\lambda},E^i\Psi_{\lambda}|i\in \mathbb{N},\lambda\in \k^{\ast}\}$ are linear independent in $\k \mathbb{D}_{\infty}^{\circ}$ (to show this, take finite number of them. Then, as the proof of Claim 3 in Proposition \ref{prop12} shows that they are already linear independent in $(\k\mathbb{D}_{\infty}/I)^{\ast}$ for some cofinite ideal $I\subset \k\mathbb{D}$).

Proposition \ref{prop12} tells us that $\Theta$ is surjective too. Combining all of these statements, $\Theta$ is a Hopf isomorphism.\qed


\end{document}